\let\oldtocsection=\tocsection
\let\oldtocsubsection=\tocsubsection
\let\oldtocsubsubsection=\tocsubsubsection
\renewcommand{\tocsection}[2]{\hspace{0em}\oldtocsection{#1}{#2}}
\renewcommand{\tocsubsection}[2]{\hspace{1em}\oldtocsubsection{#1}{#2}}
\renewcommand{\tocsubsubsection}[2]{\hspace{2em}\oldtocsubsubsection{#1}{#2}}
\def\equationcolor {\color{black}}
\def\textcolor     {\color{black}}
\def\bcoleq    {\begin{equation}\equationcolor}
\def\ecoleq    {\textcolor\end{equation}}
\def\bcoleqn   {\equationcolor\begin{eqnarray}}
\def\ecoleqn   {\end{eqnarray}\textcolor}
\def\S        {\mathbb{S}}
\def\scal{\operatorname{Scal}}
\def\det{\operatorname{det}}
\def\R{\mathbb{R}}
\def\TT{\mathbb{T}}
\DeclareMathOperator*{\Ric}{Ric}
\DeclareMathOperator*{\Scal}{Scal}
\DeclareMathOperator*{\Id}{Id}
\DeclareMathOperator*{\trace}{tr}
\newtheorem{theorem}{Theorem}[section]
\newtheorem{mythm}{Theorem}
\newtheorem{mycor}{Corollary}
\newtheorem{lemma}[theorem]{Lemma}
\theoremstyle{definition}
\newtheorem*{assumption*}{$\lambda_{1}$-Condition}
\newtheorem{example}[theorem]{Example}
\def\pproof#1{\@ifnextchar[\opargproof
{\opargproof[\it Proof of #1.]}}
\def\opargproof[#1]{\par\noindent {\bf #1 }}
\numberwithin{equation}{section}
\begin{document}

\title[Harmonic unit vector fields]{Harmonic unit vector fields on 3-manifolds}

\author[G. Habib]{\textsc{Georges Habib}}
\address{Georges Habib\newline
Lebanese University,
Faculty of Sciences II, Department of Mathematics, P.O. Box 90695
Fanar-Matn, Lebanon \& Universit{\'e} de Lorraine, CNRS, IECL, France,\newline
{\sl E-mail address:} {\bf ghabib@ul.edu.lb}
}

\author[A. Savas-Halilaj]{\textsc{Andreas Savas-Halilaj}}
\address{Andreas Savas-Halilaj\newline
University of Ioannina,
Section of Algebra \& Geometry,
45110 Ioannina, Greece,\newline
{\sl E-mail address:} {\bf ansavas@uoi.gr}
}

\renewcommand{\subjclassname}{  \textup{2000} Mathematics Subject Classification}
\subjclass[2000]{Primary 53C43, 58E20, 53C24, 53C40, 53C42, 57K35}
\keywords{Killing vector fields, Riemannian flow, harmonic unit vector fields}
\thanks{This project was initiated when G. Habib was
a fellow at the Alfried Krupp Wissenschaftskolleg in Greifswald-Germany. He
would like to thank the institute for the support. A. Savas-Halilaj is supported in the framework of H.F.R.I. call ``Basic Research Financing” under the
National Recovery and Resilience Plan “Greece 2.0" funded by the European Union-NextGenerationEU
(HFRI Project Number: 14758).}
\parindent = 0 mm
\hfuzz     = 6 pt
\parskip   = 3 mm
\date{}

\begin{abstract}
We investigate harmonic unit vector fields with totally geodesic integral curves on $3$-manifolds. Under mild
curvature assumptions, we classify both the vector fields and the manifolds that support them. Our results are inspired by Carri\`ere’s classification of Riemannian flows on compact three-manifolds, as well as by the works of Geiges and Belgun on Killing vector fields on Sasakian manifolds.
\end{abstract}

\maketitle
\setcounter{tocdepth}{2}

\section{Introduction}

Suppose that $(M,g)$ is an $m$-dimensional manifold equipped with a Riemannian metric $g$.
A unit vector field $\zeta\in\mathfrak{X}(M)$ can be regarded as a ``graphical" map from $M$ to its unit tangent bundle
equipped with the Sasaki metric.
There is a natural functional that we may consider in the space of unit vector fields, i.e., the {\em energy
functional}
$$
E(\zeta)=\frac{m}{2}{\rm vol}(M,g)+\frac{1}{2}\int_M|\nabla\zeta|^2d\mu_g,
$$
whose critical points with respect to variations through nearby unit
vector fields are called {\em harmonic unit vector fields}; see, for example, \cite{wiegmink,wood}. The Euler-Lagrange equation for a critical
point of the energy functional is
\begin{equation}\label{harmonicunit}
\nabla^*\nabla\zeta-|\nabla\zeta|^2\zeta=0,
\end{equation}
where $\nabla^*\nabla$ is the trace (or rough) Laplacian
$$
\nabla^*\nabla=-\trace\nabla^{2}_{\boldsymbol{\cdot},\boldsymbol{\cdot}}.
$$
When we regard $\zeta$ as a map into the unit tangent bundle 
and allow arbitrary variations, then the Euler-Lagrange equation produces, 
besides \eqref{harmonicunit}, also
\begin{equation}\label{harmonicunitmap}
\trace R(\nabla_{\boldsymbol{\cdot}}\zeta, \zeta) \boldsymbol{\cdot} = 0,
\end{equation}
where $R$ is the Riemann curvature tensor of $M$; see, for instance, \cite{han}.  
In particular, a unit vector field on the unit sphere defines a harmonic 
map into the unit tangent bundle if it satisfies \eqref{harmonicunit}, is 
divergence-free, and has totally geodesic integral curves.
The classification of harmonic unit vector fields, and of the manifolds supporting them, is far from being understood. Even the case of harmonic unit vector fields on space forms remains wide open. For example, it is conjectured that a harmonic unit vector field on $\mathbb{S}^{2n+1}$ must be tangent to the fibers of the Hopf-fibration; see \cite{medrano-b,han,Ben1,fourtzis,perrone,brito2,perrone2,medrano3,medranoh,brito,Borrelli,Deturck,wood1}
for further details.

A unit vector field $\zeta$ on $M$ gives rise to a one-dimensional distribution $\mathcal{V}$, which we call the \emph{vertical distribution}. Its orthogonal complement is denoted by $\mathcal{H}$ and is called the \emph{horizontal distribution}. These two distributions play a crucial role in the analysis of the solutions of the partial differential system \eqref{harmonicunit}. 
There is a tensor that measures how much the horizontal distribution is twisted within the tangent bundle of $M$.
Namely, the $(1,1)$-tensor $\varphi$ defined by
\[
\varphi(X) \doteq -\nabla_X \zeta, \quad \text{for } X \in \mathfrak{X}(M),
\]
vanishes on the vertical distribution and behaves like the second fundamental form of a hypersurface. For this reason, $\varphi$ is 
called the (formal) \emph{second fundamental form} of $\mathcal{H}$. If $\zeta$ has totally geodesic integral curves, then
$\varphi$ is the {\em second fundamental form of the foliation.} In the case where $\varphi$ is anti-symmetric, the vector field is said to be \emph{Killing}, while in the case where it is anti-symmetric only on the horizontal distribution, it is called a \emph{Riemannian flow}.

Killing vector fields arise naturally in the setting of Sasakian manifolds.
According to the Uniformization Theorem \cite{Geiges, Belgun}, compact 3-dimensional Sasakian manifolds are completely understood.
Clearly, every Killing vector field is a Riemannian flow, but the converse is not necessarily true. Moreover, it is not necessarily true that a Killing unit vector field or a Riemannian flow is a harmonic unit vector field. 
It should be mentioned that according to a very beautiful theorem of Carri\`ere~\cite{carriere}, there exists a classification of Riemannian flows in 3-dimensions and of corresponding manifolds supporting them.

The purpose of this paper is to classify harmonic unit vector fields in three dimensions and the corresponding $3$-manifolds.  
We now state the main result.

\begin{mythm}\label{thmglobal}
Let $\zeta$ be a harmonic unit vector field, with totally geodesic fibers, on a compact Riemannian $3$-manifold $M$. Suppose that 
\begin{equation}\label{sasaki}
\Ric(\zeta)=\lambda\zeta,
\end{equation}
where $\lambda$ is a non-negative number. Then the norm of the second fundamental
form of the foliation is constant and $\zeta$ is divergence-free. Moreover, there exist only two possible cases for $M$ and $\zeta$:
\begin{enumerate}[\rm(1)]
\item Either $\zeta$ is Killing, and $M$ is diffeomorphic to one of the following:
$$
(\mathbb{R}\times N)/\Gamma,\quad \mathbb{S}^3/\Gamma,\quad \widetilde{SL}(2,\mathbb{R})/\Gamma\quad\text{or}
\quad {\rm Nil}^3/\Gamma,
$$
where $N$ is a $2$-dimensional, complete simply connected, Riemannian surface and $\Gamma$ is a discrete subgroup of the connected component of the corresponding isometry group, or;
\smallskip
\item $M$ has constant scalar curvature. Moreover, around each point of $M$, there are (local) orthonormal vector fields $\{\zeta_1=\zeta,\zeta_2,\zeta_3,\}$ such that
\begin{equation}\label{lie}
[\zeta_1,\zeta_2]=a_{123}\zeta_3,\quad[\zeta_1,\zeta_3]=a_{132}\zeta_2\quad\text{and}\quad[\zeta_2,\zeta_3]=a_{231}\zeta_1,
\end{equation}
where $a_{123}$, $a_{132}$ and $a_{231}$ are real constant completely determined in terms of $\lambda$, the scalar curvature and the norm
of the second fundamental form of the foliation; see \eqref{Riccifinal}. In particular, the universal covering of
$M$ is diffeomorphic to the unimodular Lie group whose Lie algebra is described by \eqref{lie}.
\end{enumerate}
The converse is also true; namely, a unit Killing vector field 
satisfying the curvature condition \eqref{sasaki},
where $\lambda$ is a non-negative constant, must be a harmonic unit vector field. Moreover, given a Lie group whose Lie bracket on its Lie algebra satisfies \eqref{lie} gives rise to a
harmonic unit vector field with totally geodesic integral curves.
\end{mythm}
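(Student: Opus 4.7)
The plan is to analyse the equation \eqref{harmonicunit} in a local orthonormal frame $\{\zeta_1=\zeta,\zeta_2,\zeta_3\}$ adapted to the vertical/horizontal splitting $TM=\V\oplus\mathcal{H}$, couple harmonicity with the Ricci hypothesis \eqref{sasaki}, and close via Bochner-type integral identities together with a case analysis on the symmetric trace-free part of $\varphi$. The unit condition on $\zeta$ and the totally-geodesic hypothesis $\nabla_\zeta\zeta=0$ force $\varphi(\zeta)=0$ and $\varphi^{*}(\zeta)=0$, so $\varphi$ is encoded by a $2\times 2$ matrix on $\mathcal{H}$, which I decompose as
$$
\varphi\big|_{\mathcal{H}}=\tfrac{1}{2}(\div\zeta)\,\Id_{\mathcal{H}}+\sigma+\omega,
$$
with $\sigma$ symmetric trace-free and $\omega$ antisymmetric. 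The two cases of the theorem correspond exactly to $\sigma\equiv 0$ (Killing, once one also has $\div\zeta=0$) versus $\sigma\not\equiv 0$.

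To establish that $|\varphi|^{2}$ is constant and $\div\zeta=0$, I pass to the dual $1$-form $\zeta^{\flat}$. The Weitzenb\"ock identity combined with \eqref{harmonicunit} and \eqref{sasaki} gives
$$
\Delta_{H}\zeta^{\flat}=(|\varphi|^{2}+\lambda)\,\zeta^{\flat}.
$$
Applying $\delta$ yields a scalar identity for $\Delta(\div\zeta)$ in terms of $\zeta(|\varphi|^{2})$ and $(|\varphi|^{2}+\lambda)\div\zeta$, while applying $d$ produces a companion $2$-form identity encoding the antisymmetric part $\omega$. In parallel, the Bochner formula
$$
\tfrac{1}{2}\Delta|\varphi|^{2}=|\nabla\varphi|^{2}+\langle\nabla^{*}\nabla\varphi,\varphi\rangle,
$$
with $\nabla^{*}\nabla\varphi$ obtained by differentiating \eqref{harmonicunit} and commuting covariant derivatives via the Ricci identity, rewrites the right-hand side in terms of $\lambda$, $\Scal$ and algebraic expressions in $\sigma$, $\omega$, $\div\zeta$; here the $3$-dimensional identity expressing $R$ algebraically in terms of $\Ric$ is essential to eliminate Riemann in favour of $\lambda$ and $\Scal$. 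Integrating the three identities against the natural test fields and combining them, the hypothesis $\lambda\geq 0$ closes the estimate with a favourable sign and forces $|\varphi|^{2}$ constant and $\div\zeta$ zero.

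With these two facts in hand, the case analysis is natural. If $\sigma\equiv 0$ throughout $M$, then $\varphi$ is antisymmetric on all of $TM$, so $\zeta$ is Killing; its flow is Sasakian and the Uniformization Theorem of Geiges--Belgun combined with Carri\`ere's classification of Riemannian flows in dimension three yields the four diffeomorphism types listed in (1). If $\sigma\not\equiv 0$, the Bochner equalities become rigid: the connection $1$-forms of $\{\zeta_i\}$ are algebraically determined by $\varphi$, so the structure functions $a_{ijk}=g([\zeta_i,\zeta_j],\zeta_k)$ collapse to the triple \eqref{lie} and are constants expressible in $\lambda$, $\Scal$, and $|\varphi|^{2}$. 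Constancy of the structure functions automatically yields constancy of $\Scal$ and, via Milnor's classification of $3$-dimensional unimodular Lie groups, identifies the universal cover of $M$. The converse is routine: on a unit Killing field antisymmetry of $\varphi$ makes $\nabla^{*}\nabla\zeta$ collinear with $\zeta$ of magnitude $|\varphi|^{2}$, so \eqref{harmonicunit} is automatic, while on a left-invariant frame satisfying \eqref{lie} all connection data are constant and \eqref{harmonicunit} reduces to a directly verifiable algebraic identity.

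The main obstacle is the Bochner step in the middle paragraph: the coupling between $\sigma$, $\omega$, $\div\zeta$ and the Riemann tensor produces multiple cross-terms, and only the special $3$-dimensional curvature identity, combined with the precise hypothesis $\Ric(\zeta)=\lambda\zeta$ with $\lambda\geq 0$, forces the signs to align favourably. Matching the sign conventions so that the final integral inequality is sharp, and ensuring that the integration by parts produces an identity rather than a one-sided bound, is the delicate analytic core of the argument.
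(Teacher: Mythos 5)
Your overall architecture (decompose $\varphi$ on $\mathcal{H}$ into trace, symmetric trace-free and antisymmetric parts; prove $\div\zeta=0$ and $|\varphi|$ constant; then split on whether the symmetric trace-free part vanishes) matches the paper's, and your treatment of the two cases and of the converse is essentially right in outline. But the middle step --- the one you yourself flag as ``the delicate analytic core'' --- is a genuine gap, not a detail to be checked: you assert that combining $\delta$ and $d$ of the Weitzenb\"ock identity with a Bochner formula for $|\varphi|^2$ and integrating ``closes the estimate with a favourable sign'' under $\lambda\ge 0$. There is no reason to believe this. The paper's own computation (Lemma \ref{36}) gives
$$
\Delta \trace(\varphi) = \trace(\varphi)\bigl(2\det\varphi-|\varphi|^2+\Scal-3\lambda\bigr)-\zeta(\Scal),
$$
and the sign only closes under the \emph{additional} hypothesis $\Scal\le 3\lambda$, which is not assumed in Theorem \ref{thmglobal}. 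In general the cross-terms involving $\Scal$ and $\zeta(\Scal)$ do not have a sign, so no integration-by-parts argument of the kind you sketch can work on its own.

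What actually unlocks the theorem is a different mechanism: one shows (Lemma \ref{laplacvarphij}) that $\trace(\varphi J)$ --- essentially the trace of your $\omega$ against the complex structure of $\mathcal{H}$ --- is a \emph{harmonic function}, hence a constant $c$ on the compact $M$. The ODE $\zeta(\trace(\varphi J))=\trace(\varphi)\trace(\varphi J)$ then gives $c\,\trace(\varphi)=0$. If $c\ne 0$ you are done; if $c=0$ then $\varphi$ is symmetric on $\mathcal{H}$ and the Riccati equation $\zeta(\trace\varphi)=\trace(\varphi^2)+\lambda=|\varphi|^2+\lambda$ evaluated at an extremum of $\trace\varphi$ forces $\lambda=0$, $\varphi=0$ there, and hence $\trace\varphi\equiv 0$. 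This is a pointwise maximum-principle argument along the flow, not an integral Bochner estimate. Similarly, constancy of $|\varphi|$ in the horizontal directions is not obtained from a Bochner inequality but from the Codazzi-type identity for $S=\varphi+\varphi^T$ together with $\delta S=-(|S|^2/2)\zeta$, which yields $e_1(|S|^2)=e_2(|S|^2)=0$ directly. Two smaller points: in case (1) the paper does not invoke Carri\`ere's classification but rather splits into $\zeta$ parallel (universal cover $\mathbb{R}\times N$) versus a conformal rescaling to a Sasakian structure followed by Geiges' theorem, and a Killing field with $\varphi\equiv 0$ is not Sasakian, so that subcase must be separated; in case (2) the constancy of $\Scal$ is derived from $e_i(\Ric(e_1,e_2))=0$ together with $\zeta(\Scal)=0$ \emph{before} one can assert the structure functions are constant, so deducing constancy of $\Scal$ from constancy of the structure functions is circular as written.
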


The scalar curvature in the first family of Theorem \ref{thmglobal} is not necessarily constant.
Moreover, the Reeb vector field of any Sasakian 3-manifold with non-constant scalar curvature is a harmonic vector field, since the Ricci tensor satisfies the geometric condition \eqref{sasaki}
with $\lambda = 2$; see \cite{blair}.
It is a well-known fact that there are six connected, simply connected, three-dimensional unimodular Lie groups \cite{Milnor}.
Compact quotients of unimodular Lie groups by discrete subgroups are described in \cite{raymond}.
 
As an immediate consequence of Theorem \ref{thmglobal} and of the methods developed in its proof, we obtain the following results as corollaries:

\begin{mycor}\label{thmsphere}
Let $\zeta$ be a harmonic unit vector field with totally geodesic integral curves on $\S^3$.
Then $\zeta$ is the Hopf vector field.
\end{mycor}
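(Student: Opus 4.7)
The plan is to apply Theorem \ref{thmglobal} and then reduce the conclusion, via a classical identification of unit Killing fields on the round sphere, to the Hopf vector field. The hypothesis of Theorem \ref{thmglobal} is automatic on $\mathbb{S}^3$: its standard metric is Einstein with $\Ric=2g$, so every unit vector field $\zeta$ satisfies \eqref{sasaki} with $\lambda=2\ge 0$. Hence $\zeta$ falls into Case~(1) or Case~(2) of the theorem.

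In Case~(1) one is done: the only space in the list that is diffeomorphic to $\mathbb{S}^3$ is $\mathbb{S}^3/\Gamma$ with $\Gamma$ trivial, and $\zeta$ is already declared to be Killing. In Case~(2) I will argue that $\zeta$ is nevertheless Killing. Indeed, the universal cover of $M=\mathbb{S}^3$ is itself, and by Case~(2) it must also be diffeomorphic to the three-dimensional unimodular Lie group with Lie algebra \eqref{lie}. The only compact, simply connected three-dimensional Lie group is $SU(2)$, so the round metric on $\mathbb{S}^3\cong SU(2)$ realises a left-invariant metric. By Milnor's classification (or by reading the structure constants from \eqref{Riccifinal} after imposing equality of the three Ricci eigenvalues), a left-invariant metric on $SU(2)$ of constant sectional curvature is forced to be bi-invariant, equivalently $a_{231}=a_{123}=-a_{132}$. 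Every left-invariant vector field on $SU(2)$ equipped with a bi-invariant metric is Killing, so $\zeta$ is Killing in this case as well.

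The final step is to identify unit Killing vector fields on the round $\mathbb{S}^3$ with the Hopf vector fields. Embedding $\mathbb{S}^3\subset\mathbb{R}^4$, such a $\zeta$ is the restriction of a linear field $x\mapsto Ax$ with $A\in\mathfrak{so}(4)$, and the unit condition $|Ax|^2\equiv 1$ on the sphere translates to $\langle -A^2 x,x\rangle=1$ for all $x\in\mathbb{S}^3$; since $-A^2$ is a symmetric quadratic form on $\mathbb{R}^4$ that is constant on the sphere, it equals the identity, so $A^2=-I$. Hence $A$ is an orthogonal complex structure on $\mathbb{R}^4$, and its orbits on $\mathbb{S}^3$ are precisely the fibres of the associated Hopf fibration, making $\zeta$ a Hopf vector field. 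I expect the main obstacle to be Case~(2) above: deducing Killingness from the Lie-algebraic conclusion of Theorem \ref{thmglobal} requires the rigidity fact that constant-sectional-curvature left-invariant metrics on $SU(2)$ are bi-invariant, which in turn must be extracted from the explicit expressions \eqref{Riccifinal} for the structure constants.
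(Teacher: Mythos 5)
Your proposal is correct and follows essentially the same route as the paper: the non-Killing case is excluded because the round metric is Einstein while the Ricci tensor \eqref{Riccifinal} has distinct eigenvalues unless $\lambda_1=0$ (your detour through left-invariant Einstein metrics on $SU(2)$ being bi-invariant is an equivalent packaging of this comparison, and you note the direct eigenvalue argument parenthetically), after which one invokes the standard identification of unit Killing vector fields on the round $\mathbb{S}^3$ with Hopf vector fields. The paper's proof is simply a terser version of the same argument, citing the last step as well known rather than spelling out the $A^2=-I$ computation.
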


\begin{mycor}\label{thmflat}
Let $\zeta$ be a harmonic unit vector field with totally geodesic integral curves on $\TT^3$.
Then, either $\zeta$ is parallel or a rotation along a parallel vector field.
\end{mycor}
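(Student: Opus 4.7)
The plan is to specialize Theorem~\ref{thmglobal} to the flat torus. Since $\TT^3$ is Ricci-flat, the hypothesis \eqref{sasaki} is satisfied with $\lambda=0$, so one of the two alternatives of Theorem~\ref{thmglobal} must hold.

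For alternative (1), I would lift $\zeta$ to a unit Killing field $\widetilde{\zeta}$ on the universal cover $\R^3$. Every Killing field on flat $\R^3$ is affine, of the form $\widetilde\zeta(x)=v+Ax$ with $A$ skew-symmetric, and the pointwise unit-length constraint $|v+Ax|^2\equiv 1$ immediately forces $A=0$. Hence $\widetilde{\zeta}$ is a constant vector field and $\zeta$ is parallel on $\TT^3$.

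For alternative (2), the local orthonormal frame $\{\zeta_1=\zeta,\zeta_2,\zeta_3\}$ supplied by Theorem~\ref{thmglobal} realises the Milnor normal form of a unimodular three-dimensional Lie algebra with structure constants $a_{123},a_{132},a_{231}$ prescribed by \eqref{Riccifinal}. I would insert these constants into the Milnor formulas for the Ricci tensor of a left-invariant metric and impose $\Ric\equiv 0$. Flatness forces either all three structure constants to vanish, or, up to a relabeling of the frame, $a_{123}=0$ and $a_{132}=-a_{231}=-k$ for some nonzero constant $k$. In the first case the frame is holonomic, Koszul's formula gives $\nabla\zeta_i\equiv 0$ for every $i$, and $\zeta$ is parallel. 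In the second case the Lie algebra is that of the Euclidean group $\mathfrak{e}(2)$; Koszul then yields $\nabla_{\zeta_3}\zeta_3=0$ together with the rotation identities $\nabla_{\zeta_3}\zeta_1=k\zeta_2$ and $\nabla_{\zeta_3}\zeta_2=-k\zeta_1$. Integrating on the universal cover and writing $e_3=\zeta_3$, one arrives at the explicit expression $\zeta=\cos(kt)\,e_1+\sin(kt)\,e_2$, where $t$ is the arc-length along the parallel direction $e_3$ and $\{e_1,e_2,e_3\}$ is a parallel orthonormal frame --- a rotation along the parallel vector field $e_3$.

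The main obstacle is case~(2). Two points require careful handling: first, singling out $\mathfrak{e}(2)$ among the unimodular Ricci-flat Lie algebras (excluding $\mathfrak{e}(1,1)$, $\mathrm{Nil}^3$ and $\widetilde{SL}(2,\R)$), which amounts to examining the signs of the Milnor constants and showing that only the signs $(+,+,0)$ or $(-,-,0)$ are compatible with a vanishing Ricci tensor; and second, promoting the local description to a globally defined field on the compact torus, which in particular requires the rotation parameter $k$ to be commensurable with the underlying lattice so that $\zeta$ closes up on $\TT^3$.
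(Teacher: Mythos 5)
Your proposal is correct and follows essentially the same route as the paper: specialize Theorem~\ref{thmglobal} to $\lambda=\Scal=0$, dispose of the Killing alternative by showing the field is parallel, and in the non-Killing alternative read off the $\mathfrak{e}(2)$-type bracket relations from Lemma~\ref{prop:threedimecase} (the paper takes $b=-\lambda_1/2$ there) and then quote the explicit rotation frame computed in Example~\ref{Perrone}. The only superfluous concern is the commensurability of the rotation parameter with the lattice: since $\zeta$ is given on $\TT^3$ from the outset, it automatically closes up, so nothing needs to be checked there.
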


\begin{mycor}\label{thmhyper}
There are no harmonic unit vector fields with totally geodesic integral curves on a compact
$3$-dimensional hyperbolic space.
\end{mycor}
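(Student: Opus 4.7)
The plan is to proceed by contradiction. Suppose $\zeta$ is a harmonic unit vector field with totally geodesic integral curves on a compact $3$-dimensional hyperbolic manifold $M$. Because $M$ has constant sectional curvature $-1$, one has $\Ric=-2\gind$, and in particular $\Ric(\zeta)=-2\zeta$; thus condition \eqref{sasaki} is satisfied, but with $\lambda=-2$, just outside the range $\lambda\ge 0$ covered by Theorem \ref{thmglobal}. My strategy is to revisit the pointwise and integral identities developed in the proof of Theorem \ref{thmglobal} with the sign of $\lambda$ tracked carefully, and then to invoke standard rigidity statements about compact hyperbolic $3$-manifolds to preclude both possibilities in the resulting dichotomy.

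Concretely, I would first recover the structural consequences of harmonicity by rerunning the Bochner-type identity for $|\varphi|^{2}$ that drives the proof of Theorem \ref{thmglobal}. On a space form, the full Riemann tensor, and not merely its Ricci trace, enters such identities, which gives extra room to absorb the sign of $\lambda$. Integrating over the compact manifold $M$ and using the constant-curvature data $\Scal=-6$ to clean up the polynomial in $\varphi$ that appears, I expect to conclude the same consequences as in Theorem \ref{thmglobal}: the norm $|\varphi|^{2}$ is constant, $\zeta$ is divergence-free, and the same dichotomy persists, namely either (i) $\zeta$ is a unit Killing vector field on $M$, or (ii) there exists a global orthonormal frame $\{\zeta,\zeta_{2},\zeta_{3}\}$ satisfying the unimodular bracket relations \eqref{lie}, so that the universal cover of $M$ is isometric to a simply connected unimodular $3$-dimensional Lie group equipped with a left-invariant metric.

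Both alternatives are then excluded by classical facts. Alternative (i) is impossible: by Bochner's vanishing theorem for Killing fields on a compact manifold with strictly negative Ricci curvature, every Killing vector field on $M$ is identically zero, so no nontrivial unit Killing vector field can exist. Alternative (ii) is impossible by Milnor's classification of left-invariant metrics on $3$-dimensional Lie groups \cite{Milnor}: no simply connected unimodular Lie group of dimension $3$ admits a left-invariant metric of constant negative sectional curvature, since the geometry of $\mathbb{H}^{3}$ is modelled only by the non-unimodular semidirect products of the form $\R\ltimes\R^{2}$. Hence both branches of the dichotomy collapse, contradicting the assumed existence of $\zeta$.

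The main obstacle I anticipate is the first step: verifying that the dichotomy of Theorem \ref{thmglobal} continues to hold under the sign reversal $\lambda=-2$. The original proof is expected to use $\lambda\ge 0$ in a sign-critical manner when extracting the classification from the Bochner identity; for $\lambda<0$ one has to compensate by exploiting the full constant-sectional-curvature hypothesis, which pins down all components of the Riemann tensor rather than only the Ricci eigenvalue along $\zeta$. Once that sign bookkeeping is carried out, the remainder of the argument is an immediate appeal to the classical results of Bochner and Milnor.
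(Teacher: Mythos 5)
Your overall architecture is the right one, but the load-bearing step of your plan --- ``I expect to conclude the same consequences as in Theorem \ref{thmglobal}'' for $\lambda=-2$ --- is left as a hope rather than an argument, and your proposed way of filling it (re-deriving a Bochner identity for $|\varphi|^2$ and ``absorbing the sign of $\lambda$'' using the full Riemann tensor) is not carried out and is not obviously going to close. In fact no new sign bookkeeping is needed, because the relevant lemmas of the paper are not restricted to $\lambda\ge 0$: on a compact hyperbolic $3$-manifold one has $\lambda=-2$ and $\Scal=-6$, so the hypothesis $\Scal\le 3\lambda$ of Lemma \ref{36} holds (with equality), and the integration argument there already gives $\trace(\varphi)=0$; Lemma \ref{prop:threedimecase} is then stated for arbitrary $\lambda\in\mathbb{R}$ and yields constancy of $|\varphi|$ together with the dichotomy directly. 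The restriction $\lambda\ge 0$ enters only in the minimum-point argument of the proof of Theorem \ref{thmglobal}, which is bypassed here. So the gap is fillable, but as written your proposal does not fill it, and it points at the wrong mechanism for doing so.

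Granting the dichotomy, your endgame is correct but heavier than necessary in the second branch. The Killing branch is excluded exactly as in the paper, by Bochner's theorem on compact manifolds with negative Ricci curvature (alternatively, Lemma \ref{kill} gives $\lambda=|\varphi|^2\ge 0$, contradicting $\lambda=-2$). For the non-Killing branch the paper does not need Milnor's classification: the explicit Ricci matrix \eqref{Riccifinal} has horizontal eigenvalues $\tfrac{\Scal-\lambda}{2}\bigl(1\pm\tfrac{\lambda_1}{2b}\bigr)$ with $\lambda_1\neq 0$, $b\neq 0$, and since $\Scal-\lambda=-4\neq 0$ these cannot both equal $-2$; the sub-case $b=0$ is likewise excluded since it forces $\Scal=\lambda$, i.e.\ $-6=-2$. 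Your appeal to \cite{Milnor} (no unimodular $3$-dimensional Lie group carries a left-invariant metric of constant negative curvature) would also work, but it requires the extra step of promoting the local frame with constant structure constants to a simply transitive isometric action on the universal cover, which the direct comparison with \eqref{Riccifinal} avoids entirely.
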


\begin{mycor}\label{thmcontact}
Let $\zeta$ be a unit vector field with totally geodesic integral curves on a compact Riemannian $3$-manifold. If $\Ric(\zeta,\zeta)>0$, then
the dual form of the vector field $\zeta$ gives rise to a contact structure. 
\end{mycor}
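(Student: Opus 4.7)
The plan is to show that the scalar function $f$ on $M$ defined by $\zeta^{\flat}\wedge d\zeta^{\flat}=f\,d\vol_g$ is nowhere zero; in dimension three this is equivalent to $\zeta^{\flat}$ being a contact form. First I would exploit $|\zeta|=1$ together with the geodesic condition $\nabla_{\zeta}\zeta=0$ to verify the pointwise identity
\[
(d\zeta^{\flat})(\zeta,X)=\langle\nabla_{\zeta}\zeta,X\rangle-\langle\nabla_X\zeta,\zeta\rangle=0,
\]
so $\iota_{\zeta}d\zeta^{\flat}=0$ and $d\zeta^{\flat}$ is purely ``horizontal''. Writing $d\zeta^{\flat}=f\cdot\star\zeta^{\flat}$ and picking a local oriented orthonormal frame $\{\zeta,e_2,e_3\}$, the scalar $f$ is, up to a constant factor, the off-diagonal antisymmetric coefficient of $\varphi|_{\mathcal{H}}$; in particular, $f(p)=0$ exactly when the antisymmetric part $\varphi_a$ of $\varphi|_{\mathcal{H}}$ vanishes at $p$.

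Next I would derive a first-order transport equation for $f$ along the flow of $\zeta$. By Cartan's formula, $\iota_{\zeta}\zeta^{\flat}=1$ and $\iota_{\zeta}d\zeta^{\flat}=0$ give $\mathcal{L}_{\zeta}\zeta^{\flat}=0$, and thus $\mathcal{L}_{\zeta}d\zeta^{\flat}=0$. Using $\star\zeta^{\flat}=\iota_{\zeta}d\vol_g$, the identity $\mathcal{L}_{\zeta}d\vol_g=(\div\zeta)\,d\vol_g$ and $[\mathcal{L}_{\zeta},\iota_{\zeta}]=0$, I would compute $\mathcal{L}_{\zeta}(\star\zeta^{\flat})=(\div\zeta)\star\zeta^{\flat}$. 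Expanding $0=\mathcal{L}_{\zeta}(f\cdot\star\zeta^{\flat})$ then yields
\[
\zeta(f)+(\div\zeta)\,f=0,
\]
so $\{f=0\}\subset M$ is invariant under the flow of $\zeta$: along any integral curve, $f$ satisfies a linear ODE, and is therefore identically zero as soon as it vanishes at one point.

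Finally, I would close the argument with a Raychaudhuri-type estimate. A standard commutator calculation using $\nabla_{\zeta}\zeta=0$ gives the Riccati equation
\[
(\nabla_{\zeta}\varphi)(X)=\varphi^2(X)-R(\zeta,X)\zeta,\qquad X\in\mathcal{H}.
\]
Tracing over $\mathcal{H}$, using $\trace\varphi|_{\mathcal{H}}=-\div\zeta$ and the orthogonal decomposition $\varphi|_{\mathcal{H}}=\varphi_s+\varphi_a$, produces
\[
\zeta(\div\zeta)=|\varphi_a|^2-|\varphi_s|^2-\Ric(\zeta,\zeta).
\]
Arguing by contradiction, suppose $f(p_0)=0$ for some $p_0\in M$, and let $\gamma$ be the integral curve of $\zeta$ through $p_0$. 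By the transport equation, $\varphi_a\equiv 0$ along $\gamma$, so $\zeta(\div\zeta)|_{\gamma}\leq -\Ric(\zeta,\zeta)|_{\gamma}\leq -\varepsilon$, where $\varepsilon:=\min_M\Ric(\zeta,\zeta)>0$ is attained by compactness. Integrating along $\gamma$ forces $\div\zeta$ to $-\infty$, contradicting its boundedness on the compact manifold $M$. Hence $f$ is nowhere zero and $\zeta^{\flat}$ is a contact form.

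The main obstacle, I expect, is the second step: the derivation of the linear transport equation $\zeta(f)+(\div\zeta)f=0$. Its engine is the pointwise identity $\iota_{\zeta}d\zeta^{\flat}=0$, which is exactly where the totally geodesic hypothesis enters. Once this is in hand, the rigid-looking global nonvanishing problem for $f$ collapses to the dynamical invariance of its zero set, which is then ruled out by the Raychaudhuri estimate together with compactness.
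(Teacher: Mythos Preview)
Your proof is correct and follows essentially the same strategy as the paper: both establish that the function $f=\trace(\varphi J)$ satisfies the linear transport equation $\zeta(f)=(\trace\varphi)f$ (your $\zeta(f)+(\div\zeta)f=0$ is this identity, since $\trace\varphi=-\div\zeta$), deduce flow-invariance of $\{f=0\}$, and then derive a contradiction from the traced Riccati equation $\zeta(\trace\varphi)=\trace(\varphi^2)+\Ric(\zeta,\zeta)$ along a putative zero curve. The only difference is cosmetic: you obtain the transport equation via Cartan's formula and $\mathcal{L}_\zeta d\zeta^\flat=0$, whereas the paper derives it by computing $\nabla_\zeta(\varphi J)$ using the Riccati equation and $\nabla_\zeta J=0$.
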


Corollary \ref{thmsphere} was proved by Fourtzis, Markellos, and Savas-Halilaj \cite{fourtzis} while Corollary \ref{thmcontact} was shown by Gluck and Gu~\cite{gluck-gu} in the case of the 3-sphere $\mathbb{S}^3$.
In the case of the 3-dimensional hyperbolic space, and when $\zeta$ is divergence-free, Corollary \ref{thmhyper} was obtained by Perrone \cite{perrone}.

\section{Killing vector fields}
In this section, we recall some basic facts about Killing vector fields. In particular, we review the uniformization theorem for three-dimensional manifolds as stated in \cite{Geiges, Belgun}. We begin with the following lemma, which provides a condition under which a Killing vector field is harmonic.
\begin{lemma}\label{kill}
Let $(M,g)$ be a Riemannian manifold and let $\zeta$ be a unit Killing vector field.
If
$$\Ric(\zeta)=f \zeta,$$
for some smooth function $f$ on $M$, then
$f=|\varphi|^2\ge 0$
and $\zeta$ is a harmonic vector field. 
\end{lemma}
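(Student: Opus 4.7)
The plan is to reduce the harmonicity equation \eqref{harmonicunit} to a single scalar identity by combining the classical Bochner-type formula for Killing vector fields with the fact that $|\zeta|\equiv 1$. Specifically, for any Killing field one has the standard identity
$$
\nabla^*\nabla\zeta=\Ric(\zeta),
$$
which together with the hypothesis $\Ric(\zeta)=f\zeta$ immediately yields $\nabla^*\nabla\zeta=f\zeta$. So once $f$ is identified with $|\varphi|^2$, the Euler–Lagrange equation \eqref{harmonicunit} is automatically satisfied.

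First, I would establish (or simply recall) the Killing-field identity above. The cleanest route is to differentiate the skew-symmetry condition $g(\nabla_X\zeta,Y)+g(X,\nabla_Y\zeta)=0$ to derive the pointwise relation $\nabla^2_{X,Y}\zeta=R(\zeta,X)Y$ (in a local frame with $\nabla_{e_i}e_j=0$ at the point of interest), and then trace over $X=Y=e_i$. Compatibility with the paper's convention $\nabla^*\nabla=-\trace\nabla^2_{\boldsymbol{\cdot},\boldsymbol{\cdot}}$ forces the sign so that $\nabla^*\nabla\zeta=\Ric(\zeta)$. This is the one step where sign conventions matter; it is classical but needs to be verified against the Ricci convention in use, and is really the only place where any nontrivial calculation occurs.

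Next, I would take the inner product with $\zeta$ and use the elementary identity
$$
\tfrac12\,\nabla^*\nabla\bigl(|\zeta|^2\bigr)=\langle\nabla^*\nabla\zeta,\zeta\rangle-|\nabla\zeta|^2,
$$
valid for any vector field. Since $|\zeta|^2\equiv 1$, the left-hand side vanishes, giving $\langle\nabla^*\nabla\zeta,\zeta\rangle=|\nabla\zeta|^2$. Recalling that $\varphi(X)=-\nabla_X\zeta$, this reads $\langle\nabla^*\nabla\zeta,\zeta\rangle=|\varphi|^2$. On the other hand, the relation $\nabla^*\nabla\zeta=f\zeta$ paired with $\zeta$ gives $\langle\nabla^*\nabla\zeta,\zeta\rangle=f$. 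Equating the two expressions yields $f=|\varphi|^2\ge 0$.

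Finally, substituting back gives $\nabla^*\nabla\zeta-|\varphi|^2\zeta=f\zeta-f\zeta=0$, which is precisely the harmonic unit vector field equation \eqref{harmonicunit}. I do not anticipate any essential obstacle here: the entire argument is a direct manipulation, and the only point requiring attention is the sign verification of $\nabla^*\nabla\zeta=\Ric(\zeta)$ under the paper's conventions.
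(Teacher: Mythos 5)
Your argument is correct, and it hinges on the same key identity as the paper's proof: the Bochner/Kostant formula $\nabla^*\nabla\zeta=\Ric(\zeta)$ for Killing fields, obtained by tracing $\nabla^2_{X,Y}\zeta=R(X,\zeta)Y$ (the paper's \eqref{konstant}); your caution about matching the sign to the convention $\nabla^*\nabla=-\trace\nabla^2_{\boldsymbol{\cdot},\boldsymbol{\cdot}}$ is well placed and does check out. The only genuine difference lies in how $f=|\varphi|^2$ is identified. The paper computes $\Ric(\zeta,\zeta)$ directly from Kostant's formula, using $\nabla_\zeta\zeta=0$ and the antisymmetry of $\varphi$, so that $\Ric(\zeta,\zeta)=|\varphi|^2$ emerges as a curvature identity in its own right, independent of the eigenvector hypothesis. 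You instead pair $\nabla^*\nabla\zeta=f\zeta$ with $\zeta$ and invoke $\tfrac12\,\nabla^*\nabla\bigl(|\zeta|^2\bigr)=\langle\nabla^*\nabla\zeta,\zeta\rangle-|\nabla\zeta|^2$ together with $|\zeta|\equiv1$; this step uses only unit length, not the Killing property, and so is marginally more general and avoids the pointwise curvature manipulation. Both routes are equally rigorous and of comparable length, and your concluding substitution into \eqref{harmonicunit} is exactly the paper's final step.
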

\begin{proof}
Since $\zeta$ is Killing, its integral curves are geodesics. The Hessian of $\zeta$ is given by Kostant's formula:
\begin{equation}\label{konstant}
\nabla^2_{X,Y}\zeta\doteq\nabla_{X}\nabla_Y\zeta-\nabla_{\nabla_{X}Y}\zeta = R(X,\zeta)Y, \quad \text{for all } X,Y \in \mathfrak{X}(M),
\end{equation}
see, for example, \cite[Proposition 8.1.3.]{petersen}.

\newpage
Taking a local orthonormal frame $(e_i)$ on $M$, and using the fact that $\varphi$ is anti-symmetric, we deduce
\[
f = \Ric(\zeta,\zeta) = \sum_i g(R(e_i,\zeta)\zeta,e_i)
   = \sum_i g(\nabla^2_{e_i,\zeta}\zeta,e_i)
   = \sum_i g(\nabla_{\varphi(e_i)}\zeta,e_i)
   = |\varphi|^2.
\]
Kostant's formula \eqref{konstant} can also be expressed in terms of $\varphi$ as
\[
(\nabla_X\varphi)Y = R(\zeta,X)Y, \quad \text{for all } X,Y \in \mathfrak{X}(M).
\]
Taking the trace yields
\[
-\nabla^*\nabla\zeta = -\Ric(\zeta) = -|\varphi|^2 \zeta,
\]
from which we conclude that $\zeta$ is harmonic.
\end{proof}

Let $(M,g)$ be a $(2m+1)$-dimensional Riemannian manifold, and let $\zeta$ be a unit vector field. 
We say that $(M,g,\zeta)$ is a \emph{Sasakian manifold} if $\zeta$ is a Killing vector field such that the
second fundamental form $\varphi$ of the foliation satisfies the following properties:
$$
\varphi^{2} = -{\rm Id} + \zeta \otimes \zeta\quad\text{and}\quad
 (\nabla_X \varphi)(Y) = \langle X, Y \rangle \zeta - \langle Y, \zeta \rangle X,\qquad\text{for all}\,\,\,
X, Y \in \mathfrak{X}(M).$$ 
These two conditions are equivalent to the statement that $\varphi$ defines a K\"ahler structure on the distribution $\mathcal{H}$.

In \cite{Geiges}, Geiges classified compact three-dimensional Sasakian manifolds. 
More precisely, he proved the following (see also \cite{Belgun}):

\begin{theorem}[Geiges, 1997]\label{thm:sasaki3manifold}
Let $(M,g,\zeta)$ be a compact three-dimensional Sasakian manifold. 
Then $M$ is diffeomorphic to one of the following:
\begin{enumerate}[\rm(1)]
  \item $\mathbb{S}^3 / \Gamma$, with $\Gamma \subset {\rm SO}(4)$,
  \medskip
  \item $\widetilde{SL}(2,\mathbb{R}) / \Gamma$, with $\Gamma \subset \mathfrak{I}_0(\widetilde{SL}(2,\mathbb{R}))$,
  \medskip
  \item ${\rm Nil}^3 / \Gamma$, with $\Gamma \subset \mathfrak{I}_0({\rm Nil}^3)$,
\end{enumerate}
where $\Gamma$ is a discrete subgroup of the connected component of the corresponding isometry group.
\end{theorem}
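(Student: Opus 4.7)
My plan is to exploit the contact structure carried by every Sasakian $3$-manifold and then reduce the classification to the transverse geometry of the Reeb flow generated by $\zeta$. Let $\eta$ be the metric-dual one-form of $\zeta$. Since $\zeta$ is Killing, $\nabla\zeta$ is anti-symmetric, and a direct computation using $\varphi(X)=-\nabla_X\zeta$ gives
\[
d\eta(X,Y)=-2\,g(\varphi X,Y).
\]
From $\varphi^{2}=-\Id+\zeta\otimes\eta$ one sees that $\varphi$ has maximal rank $2$ on $\mathcal{H}$, so $\eta\wedge d\eta$ is nowhere vanishing and $\eta$ is a contact form whose Reeb vector field is precisely $\zeta$. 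Furthermore, the identity $(\nabla_X\varphi)Y=g(X,Y)\zeta-g(Y,\zeta)X$ together with the Killing property of $\zeta$ show that the flow of $\zeta$ preserves both $g$ and $\varphi|_{\mathcal{H}}$, so this flow is Riemannian with transverse K\"ahler structure.

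Second, I would analyse the leaf space of the Reeb foliation. In the quasi-regular case, where every orbit of $\zeta$ is closed, $M$ is a Seifert $\mathbb{S}^1$-fibration over a compact K\"ahler $2$-orbifold $B$, and the uniformization theorem for Riemann orbifolds puts $B$ into one of three classes: elliptic, flat, or hyperbolic. For the irregular case, I would invoke Rukimbira's approximation theorem for $K$-contact structures, which supplies a quasi-regular Sasakian structure arbitrarily close to the original one on the same underlying manifold, reducing the irregular case to the quasi-regular setting without altering the diffeomorphism type of $M$.

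Third, I would identify the universal cover of $M$ with one of the three model geometries through the transverse curvature. Combining Kostant's formula \eqref{konstant} with the Sasakian identity for $\nabla\varphi$ yields an expression for the sectional curvatures of $M$ along $\mathcal{H}$ in terms of the transverse holomorphic curvature of $B$ plus a correction involving $|\varphi|^{2}$. Positive, zero or negative transverse holomorphic curvature identifies $\widetilde{M}$, via the standard structure theorem for Sasakian space forms (cf.\ \cite{blair}), with $\mathbb{S}^3$, the Heisenberg group ${\rm Nil}^3$, or $\widetilde{SL}(2,\mathbb{R})$, respectively. Finally, $M=\widetilde{M}/\Gamma$ for a discrete subgroup $\Gamma$ of the isometry group of $\widetilde{M}$; since $\zeta$ is canonically defined by the Sasakian structure, $\Gamma$ must preserve it, forcing $\Gamma\subset\mathfrak{I}_{0}(\widetilde{M})$ and yielding the three listed cases.

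The hardest point is the irregular case: the leaf space is generally non-Hausdorff and orbifold uniformization does not apply directly. One must either justify that Rukimbira's deformation preserves the topological type of $M$ (thereby reducing to the quasi-regular setting) or argue directly via the Molino structure theorem for Riemannian foliations combined with Carri\`ere's classification of Riemannian flows on compact $3$-manifolds. In the latter approach, the contact (non-closed) nature of $\eta$ eliminates the toral-bundle and suspension cases in Carri\`ere's list, leaving precisely the three Seifert-type families that appear in the statement.
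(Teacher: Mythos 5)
You should first note that the paper does not prove this statement at all: it is quoted verbatim as Geiges' uniformization theorem, with the proof delegated to \cite{Geiges} and \cite{Belgun}. So there is no internal argument to compare against; what you have written is a reconstruction of the proof from the literature, and on that level it follows essentially the standard route (contact structure from the Sasakian data, Seifert fibration over a K\"ahler $2$-orbifold in the quasi-regular case, Rukimbira-type approximation or Molino/Carri\`ere in the irregular case, then identification of the Thurston geometry). Your opening computation $d\eta(X,Y)=-2g(\varphi X,Y)$ and the nonvanishing of $\eta\wedge d\eta$ are correct, and the reduction of the irregular case to the quasi-regular one via approximation of the $K$-contact structure on the \emph{same} underlying manifold is a legitimate and standard device.

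The one step that would fail as literally written is the trichotomy in your third paragraph: ``positive, zero or negative transverse holomorphic curvature identifies $\widetilde M$.'' A compact Sasakian $3$-manifold need not have transverse curvature of constant sign, so a pointwise sign condition does not partition the class of such manifolds; the structure theorem for Sasakian \emph{space forms} you invoke from \cite{blair} only applies after you know the transverse curvature is constant, which is not part of the hypothesis here. The correct invariant is global: the orbifold Euler characteristic of the base $B$ (equivalently, the sign of the integral of the transverse curvature via orbifold Gauss--Bonnet), combined with the nonvanishing of the Euler number of the Seifert fibration forced by $[d\eta]$ being a transverse symplectic class. That pair $(\operatorname{sign}\chi^{\mathrm{orb}}(B),\,e\neq 0)$ is what selects the geometry $\mathbb{S}^3$, ${\rm Nil}^3$ or $\widetilde{SL}(2,\mathbb{R})$, and it also handles the bad orbifolds (teardrops and asymmetric spindles), which occur as bases of Seifert fibrations of lens spaces and are absorbed into the $\mathbb{S}^3/\Gamma$ case. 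With that replacement, and with the (nontrivial, but true) justification that Rukimbira's deformation does not change the diffeomorphism type since it only perturbs the Reeb field inside the closure of its flow, your outline is a faithful sketch of the argument the paper is citing.
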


We would like to mention that $3$-dimensional Riemannian manifolds supporting Riemannian flows
were classified by Carri\`ere \cite{carriere}.

Let us now discuss some examples of harmonic unit vector fields, as well as examples of Riemannian flows.

\begin{example}[Hopf vector field]
Let us consider the unit Euclidean sphere $\mathbb{S}^{3}$ as a subset of $\mathbb{C}^{2}$, centered at the origin, and denote by $J$ its standard complex structure, i.e., left multiplication by $i \in \mathbb{C}$. The vector field
\[
\S^3\ni p\mapsto\zeta_p \doteq -J p\in T_{p}\S^3,
\]
is globally defined and tangent to the sphere.  
The vector field $\zeta$ is called the \emph{Hopf vector field}. Since it is a unit Killing vector field and
$$
\Ric(\zeta) = 2 \zeta,
$$
it follows from Lemma \ref{kill} that $\zeta$ is harmonic. This vector field induces a Riemannian flow.
\end{example}

\begin{example}[The hyperbolic torus]\label{carriere}
Let $A$ be a matrix in
\[
\mathrm{SL}(2,\mathbb{Z}) \doteq \{ A \in \mathrm{Mat}_{2 \times 2}(\mathbb{Z}) \mid \det(A) = 1 \},
\]
and consider the quotient manifold
$$\TT_A^3\doteq\TT^2\times\R/(m,t)\sim (A(m),t+1).$$
The space $\TT_A^3$ is called the {\em hyperbolic torus}.
The matrix $A$ has two eigenvalues $\beta>1$ and ${1}/{\beta}$ with corresponding eigenvectors $v_1$ and $v_2$. Equip $\TT_A^3$
with the Riemannian metric $g$ given by 
$$g\doteq\beta^{-2t} dx^2\oplus \beta^{2t} ds^2\oplus dt^2,$$
where $(x,s,t)$ are the local coordinates corresponding in the $v_2, v_1$, $\partial_t$ directions, respectively.

Consider the orthonormal frame
\[
e_1 \doteq \beta^{-t} \partial_s, \quad
e_2 \doteq \beta^t \partial_x, \quad
e_3 \doteq \partial_t.
\]
Then,
$$
[e_1,e_2]=0,\quad[e_1,e_3]=\ln(\beta)e_1,\quad [e_2,e_3]=-\ln(\beta)e_2,
$$
and from the Koszul formula we deduce that
$$
\nabla_{e_1}e_1=-\ln(\beta)e_3,\quad \nabla_{e_1}e_2=0,\quad \nabla_{e_1}e_3=\ln(\beta)e_1,
$$
and
$$
\nabla_{e_2}e_1=0,\quad \nabla_{e_2}e_2=\ln(\beta)e_3,\quad \nabla_{e_2}e_3=-\ln(\beta)e_2.
$$
Moreover,
$$
\nabla_{e_3}e_1=0,\quad \nabla_{e_3}e_2=0,\quad \nabla_{e_3}e_3=0.
$$
The Ricci tensor, with respect to the frame $(e_1, e_2, e_3)$, is
\[
\Ric = 
\begin{pmatrix}
0 & 0 & 0\\
0 & 0 & 0\\
0 & 0 & -2 (\ln(\beta))^2
\end{pmatrix}.
\]
The representation of the second fundamental form $\varphi_3\doteq-\nabla e_3$ with respect to the
basis $(e_1, e_2)$ is
\[
\varphi_3=
\begin{pmatrix}
-\ln(\beta) & 0\\
0 & \ln(\beta)
\end{pmatrix},
\]
which is symmetric rather than anti-symmetric. As a matter of fact, the vector field $e_3$ is a harmonic vector field, with totally geodesic integral curves, which is not Killing.
On the other hand, the unit vector field $e_1$ is harmonic, but its integral curves are not totally geodesic. In particular, the second fundamental form $\varphi_1\doteq-\nabla e_1$ is identically zero on the horizontal distribution
of $e_1$.
\end{example}

\begin{example}[Harmonic unit vector fields on the unimodular Lie group \cite{perrone, Milnor}]\label{Perrone} There exists a more general construction than the hyperbolic torus. 
Let $\mathfrak{g}$ be a simply connected $3$-dimensional Lie algebra generated by the vector fields $(e_1, e_2,e_3)$, with Lie brackets defined by 
\[
[e_1,e_2] = \alpha e_3, \quad [e_1,e_3] = \beta e_2, \quad [e_2, e_3] = \gamma e_1, 
\quad \text{for } \alpha, \beta, \gamma \in \mathbb{R}.
\]
On the associated Lie group $\mathscr{G}$, called the unimodular Lie group \cite{ Milnor}, consider the left-invariant metric such that 
$(e_1, e_2,e_3)$ is orthogonal at the identity. 
A straightforward computation shows that these vector fields have totally geodesic integral curves. Moreover,
the Levi-Civita connection satisfy
\[
\nabla_{e_3} e_1 = \frac{\gamma-\alpha - \beta}{2} e_2, \quad 
\nabla_{e_3} e_2 = \frac{\alpha + \beta - \gamma}{2} e_1, \quad 
\nabla_{e_1} e_3 = \frac{\beta-\alpha + \gamma}{2} e_2,
\]
and
\[
\nabla_{e_2} e_3 = \frac{\alpha + \beta + \gamma}{2} e_1, \quad 
\nabla_{e_2} e_1 = \frac{-\alpha - \beta - \gamma}{2} e_3, \quad 
\nabla_{e_1} e_2 = \frac{\alpha - \beta - \gamma}{2} e_3. 
\]
The matrix of the second fundamental form 
\[
\varphi = -\nabla e_3,
\]
of $\zeta$ with respect to the frame $(e_1, e_2)$ is given by
\[
\varphi = -\frac{1}{2}
\begin{pmatrix}
0 & \alpha + \beta + \gamma \\[4pt]
-\alpha + \beta + \gamma & 0
\end{pmatrix}.
\]
Moreover, the vector field $e_3$ is harmonic, since
\[
\nabla^*\nabla e_3=(\nabla_{e_1} \varphi)(e_1) + (\nabla_{e_2} \varphi)(e_2)
= \frac{(\alpha + \beta + \gamma)^2}{4} e_3 
+ \frac{(\alpha - \beta - \gamma)^2}{4} e_3
= |\varphi|^2 e_3.
\]
A similar computation shows that $e_1$ and $e_2$ are also harmonic unit vector fields with totally geodesic integral curves. 
The Ricci tensor, with respect to the basis $(e_1, e_2,e_3)$, is
\[
\Ric = -\frac{1}{2}
\begin{pmatrix}
(\alpha + \beta - \gamma)(\alpha + \beta + \gamma) & 0 & 0 \\[4pt]
0 & (\alpha - \beta - \gamma)(\alpha + \beta - \gamma) & 0 \\[4pt]
0 & 0 & (\alpha + \beta + \gamma)(-\alpha+\beta + \gamma)
\end{pmatrix}.
\]
In the special case $\alpha = -\beta$ and $\gamma = 0$,
the Ricci tensor vanishes, and hence the metric is flat. 
Therefore, the universal covering of $\mathscr{G}$ is diffeomorphic to $\mathbb{R}^3$. 
In this case, the vector field $e_1$ is parallel, and the distribution generated by $e_2$ and $e_3$ is integrable and totally geodesic. 
Therefore, the universal covering of $\mathscr{G}$ is diffeomorphic to $\mathbb{R}^3$.
If the metric is complete, it is isometric to the Euclidean one.
Thus, one can choose local coordinates $(x, y, z)$ such that $e_1 = \partial_x$.
From the computation of the connection forms, one easily deduces that $e_2$ and $e_3$ must take the form
\[
e_1 = \partial_x,\quad e_2 = \sin(\alpha x)\,\partial_y + \cos(\alpha x)\,\partial_z,
\quad
 e_3 = \cos(\alpha x)\,\partial_y - \sin(\alpha x)\,\partial_z,
\]
where $\alpha$ is a non-zero real number.
The vector fields $e_2$ and $e_3$ descend to $\mathbb{T}^3$ if $\alpha\in\mathbb{Z}$. 
 \end{example}

\section{General Bochner-Weitzenb\"ock formulas}\label{sec2}
In this section, we introduce the notation and review some fundamental results on harmonic unit vector fields. Although most of these computations hold in any dimension, we restrict our attention to the 3-dimensional case.

\subsection{Codazzi and Riccati type equations} Let $\zeta$ be a harmonic unit vector field with totally geodesic integral curves,
defined in an open
neighbourhood $V\subset M$.
It is well-known that the second fundamental form $\varphi$ of the foliation satisfies the equations
\begin{equation}\label{codazzi}
(\nabla_{\zeta}\varphi)(X)=\varphi^2(X)+R(X\,,\zeta)\zeta \quad\text{and}\quad(\nabla_{X}\varphi)(Y)
-(\nabla_{Y}\varphi)(X)=-R(X,Y)\zeta,
\end{equation}
for any $X,Y\in \mathfrak{X}(M)$
see, for example,
\cite[p. 313]{baird1}. On the other hand, harmonicity of the vector field $\zeta$ can be expressed equivalently 
in terms of the {\em divergence} $\delta\varphi$ of the second fundamental form $\varphi$. Namely, if $(e_i)$ is a local orthonormal
frame on $M$, then
\begin{equation}\label{dz}
-\delta\varphi\doteq\sum_{i}(\nabla_{e_i}\varphi)(e_i)=\sum_{i}\big(\nabla_{e_i}\varphi(e_i)-\varphi(\nabla_{e_i}e_i)\big)=|\varphi|^2\zeta.
\end{equation}
Let us make some comments about the quantities that appear
in \eqref{codazzi}.
\begin{itemize}
\item
The first equality in \eqref{codazzi} is a
Riccati and the second one is Codazzi type equation. One can easily see that the Riccati is a special case of Codazzi equation.
\smallskip
\item
Note that in general $\varphi$ is not symmetric neither
skew-symmetric. Let us denote the symmetric part of $\varphi$ by $S$.
We may write $S$ in the form
\begin{equation}\label{symmetric}
S\doteq\varphi+\varphi^T,
\end{equation}
where $\varphi^T$ is the transpose of $\varphi$.
Moreover, denote the skew-symmetric part of the second fundamental form $\varphi$ by $\tilde{S}$, that is
\begin{equation}\label{antisymmetric}
\tilde{S}\doteq\varphi-\varphi^T.
\end{equation}
\item
The $(1,1)$-tensor $L$ given by $L(\boldsymbol{\cdot})\doteq R(\boldsymbol{\cdot},\zeta)\zeta$,
is called the {\em Jacobi operator}.
\end{itemize}

In Lemmas \ref{codvarphitra} and \ref{laplacians} below, we derive several important identities involving the tensor $\varphi$.
These identities hold in all dimensions.

\begin{lemma}\label{codvarphitra}
The following facts hold true:
\begin{enumerate}[\rm(1)]
\item
 The $(1,1)$-tensor $\varphi^T$ satisfies
 \begin{equation}\label{cod111}
 \delta\varphi^T=\Ric(\zeta)-d(\trace (\varphi)) \quad\text{and}\quad
\nabla_{\zeta}\varphi^T=(\varphi^T)^2+L.
\end{equation}
\item
For any $X,Y,Z\in\mathfrak{X}(M)$, we have
$$\langle (\nabla_{X}\varphi^T)(Y)-(\nabla_{Y}\varphi^T)(X),Z \rangle=\langle(\nabla_Z \tilde{S})(X),Y\rangle-R(X,Y,\zeta,Z),$$
and, in particular,
\begin{equation}\label{eq:codS}
\langle (\nabla_{X}S)(Y)-(\nabla_{Y}S)(X),Z \rangle=\langle(\nabla_Z \tilde{S})(X),Y\rangle-2R(X,Y,\zeta,Z).
\end{equation}
\end{enumerate}
\end{lemma}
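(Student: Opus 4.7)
The starting point of the whole lemma is the pointwise adjoint identity
$\langle (\nabla_X\varphi^T)Y,Z\rangle=\langle Y,(\nabla_X\varphi)Z\rangle$, which follows because the metric is parallel, so transposition commutes with $\nabla$. Every claim in the statement will come from combining this identity with the Riccati and Codazzi equations \eqref{codazzi}, together with the standard symmetries of $R$ and, at the very end, the first Bianchi identity.

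For part~(1), I would compute $\langle -\delta\varphi^T,Z\rangle=\sum_i\langle e_i,(\nabla_{e_i}\varphi)Z\rangle$ using the identity above on a local orthonormal frame $(e_i)$. Applying the Codazzi equation to swap the order of differentiation I obtain
\[
\sum_i\langle e_i,(\nabla_{e_i}\varphi)Z\rangle=\sum_i\langle e_i,(\nabla_Z\varphi)e_i\rangle-\sum_i\langle e_i,R(e_i,Z)\zeta\rangle.
\]
The first sum is $Z(\trace\varphi)$, while the second, after a single pair-swap of $R$, equals $\Ric(\zeta,Z)$. This gives $\delta\varphi^T=\Ric(\zeta)-d(\trace\varphi)$. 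For the second formula in (1), I would use the adjoint identity with $X=\zeta$ together with the Riccati equation to write $\langle(\nabla_\zeta\varphi^T)Y,Z\rangle=\langle Y,\varphi^2 Z\rangle+\langle Y,LZ\rangle$; since $L$ is symmetric (an immediate consequence of the pair symmetry of $R$), this rearranges to $(\varphi^T)^2Y+LY$.

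For part~(2), I would start with
\[
\langle(\nabla_X\varphi^T)Y-(\nabla_Y\varphi^T)X,Z\rangle=\langle Y,(\nabla_X\varphi)Z\rangle-\langle X,(\nabla_Y\varphi)Z\rangle
\]
and then use the Codazzi equation to rewrite both $(\nabla_X\varphi)Z$ and $(\nabla_Y\varphi)Z$ as $(\nabla_Z\varphi)$ of the corresponding vector plus a curvature term. The $\nabla_Z$ contributions assemble into $\langle(\nabla_Z\varphi^T-\nabla_Z\varphi)Y,X\rangle=-\langle(\nabla_Z\tilde{S})Y,X\rangle$, and since $\tilde{S}$ is anti-symmetric, its covariant derivative is also anti-symmetric, giving $\langle(\nabla_Z\tilde{S})X,Y\rangle$. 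The curvature contribution is $R(Y,Z,\zeta,X)-R(X,Z,\zeta,Y)$, and the main step is to check that this equals $-R(X,Y,\zeta,Z)$. This is where I expect the bookkeeping to be delicate: after applying pair symmetry to line things up, the reduction to $-R(X,Y,\zeta,Z)$ is exactly the first Bianchi identity for the triple $(\zeta,X,Y)$ paired against $Z$.

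Finally, for the displayed identity \eqref{eq:codS}, I would write $S=\varphi+\varphi^T$ and simply add the Codazzi equation for $\varphi$, which produces $-R(X,Y)\zeta$, to the formula just established for $\varphi^T$; this yields the factor of $2$ on the curvature term. The hardest part of the argument is the Bianchi manipulation at the end of part~(2); everything else is essentially forced by the parallelism of the metric and the Codazzi/Riccati equations.
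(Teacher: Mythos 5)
Your proposal is correct and follows essentially the same route as the paper: the adjoint identity $\langle(\nabla_X\varphi^T)Y,Z\rangle=\langle Y,(\nabla_X\varphi)Z\rangle$, the Riccati and Codazzi equations \eqref{codazzi}, tracing over a frame for the divergence formula, and the first Bianchi identity to collapse $R(Y,Z,\zeta,X)-R(X,Z,\zeta,Y)$ into $-R(X,Y,\zeta,Z)$ in part (2). The final step, adding the Codazzi equation for $\varphi$ to the one just proved for $\varphi^T$ to get the factor of $2$ in \eqref{eq:codS}, is also exactly what the paper does.
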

\begin{proof}
Consider a local orthonormal frame $(e_i)$ on $M$. To simplify the computations, consider vector fields $X,Y,Z$
that are parallel at some fixed point $x_0$ on $M$.

(1) Differentiating and estimating at $x_0$, and using \eqref{codazzi}, we get
\begin{eqnarray*}
-(\delta\varphi^T)(X)&\!\!\!=\!\!\!&\sum_{i}\langle(\nabla_{e_i}\varphi^T)(e_i),X\rangle\\
&\!\!\!=\!\!\!&\sum_{i}\langle (\nabla_{e_i}\varphi)(X),e_i\rangle
=\sum_{i}\langle(\nabla_X\varphi)(e_i)-R(e_i,X)\zeta,e_i\rangle.
\end{eqnarray*}
Therefore,
$$
-(\delta\varphi^T)(X)= X(\trace(\varphi))
-\Ric(\zeta,X),
$$
which concludes the proof of the first part. Moreover, again using \eqref{codazzi}, we deduce that
\begin{eqnarray*}
\langle(\nabla_{\zeta}\varphi^T)(X),Y\rangle
&\!\!\!=\!\!\!&\langle X,(\nabla_{\zeta}\varphi)(Y)\rangle
=\langle X,\varphi^2(Y)\rangle+R(Y,\zeta,\zeta,X)\\
&\!\!\!=\!\!\!&\langle (\varphi^T)^2X,Y\rangle+R(X,\zeta,\zeta,Y), 
\end{eqnarray*}    
which shows the required identity.

(2) With the help of \eqref{codazzi} and of the first Bianchi identity, we compute 
\begin{eqnarray*}
 \langle (\nabla_{X}\varphi^T)(Y)&\!\!\!-\!\!\!&(\nabla_{Y}\varphi^T)(X),Z \rangle = \langle Y,(\nabla_{X}\varphi)(Z)\rangle-\langle X,(\nabla_{Y}\varphi)(Z)\rangle\\
 &\!\!\!=\!\!\!&\langle Y,(\nabla_{Z}\varphi)(X)\rangle-R(X,Z,\zeta,Y)-\langle X,(\nabla_{Z}\varphi)(Y)\rangle+R(Y,Z,\zeta,X)\\
 &\!\!\!=\!\!\!&\langle Y,(\nabla_{Z}\varphi)(X)\rangle-\langle (\nabla_{Z}\varphi^T)(X),Y\rangle-R(X,Z,\zeta,Y)+R(Y,Z,\zeta,X)\\
 &\!\!\!=\!\!\!&\langle(\nabla_Z \tilde{S})(X),Y\rangle-R(X,Y,\zeta,Z).
\end{eqnarray*}
The Codazzi type equation for $S$ comes from those of $\varphi$ and $\varphi^T$.
\end{proof}

 Since the manifold is of dimension $3$, we denote by $J$ the complex structure on $\mathcal{H}$ and consider a local orthonormal frame of the form 
\[
\{e_1,\, e_2 \doteq J (e_1),\, \zeta\}.
\]
We extend $J$ to the tangent bundle $TM$ by setting
$$J(\zeta) \doteq 0.$$

\begin{lemma}\label{odes} 
The following relation holds for all $X, Y \in \mathfrak{X}(M)$:
\begin{equation}\label{eq:nablaJ}
(\nabla_X J)(Y)
= -\langle J\varphi(X), Y \rangle\, \zeta
  + \langle Y, \zeta \rangle\, J\varphi(X).
\end{equation}
Moreover, the functions $\trace(\varphi)$ and $\trace(\varphi J)$ satisfy the following differential equations along the leaves of the totally geodesic foliation:
\begin{equation}\label{ode2dimen3}
\begin{cases}
\zeta(\trace(\varphi J))\!\!\!&=\,\,\,\trace(\varphi)\trace(\varphi J),
\\
\zeta(\trace(\varphi))\!\!\!&=\,\,\,\trace(\varphi^2)+\Ric(\zeta,\zeta)=(\trace(\varphi))^2-2\det(\varphi)+\Ric(\zeta,\zeta),\\
\zeta(|\varphi|^2)\!\!\!&=\,\,\,2\trace(\varphi^2\varphi^T+\varphi^TL).
\end{cases}
\end{equation}
\end{lemma}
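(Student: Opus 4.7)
The plan is to reduce everything to the Riccati equation $\nabla_\zeta\varphi=\varphi^2+L$ from \eqref{codazzi}, combined with pointwise linear algebra in the horizontal $2$-plane $\mathcal{H}$.

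For the formula \eqref{eq:nablaJ}, I would work directly in a local orthonormal frame $\{e_1,\,e_2=Je_1,\,\zeta\}$ adapted to the foliation. Because the integral curves of $\zeta$ are geodesics and $|e_1|=|e_2|=1$, the only non-trivial components of $\nabla_X e_i$ are $\langle\nabla_X e_1,e_2\rangle$ and $\langle\nabla_X e_i,\zeta\rangle=-\langle e_i,\nabla_X\zeta\rangle=\langle e_i,\varphi(X)\rangle$. Using $J\zeta=0$, $Je_1=e_2$, $Je_2=-e_1$, one short computation gives $(\nabla_X J)(e_1)$, $(\nabla_X J)(e_2)$, and $(\nabla_X J)(\zeta)=J\varphi(X)$. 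One then checks separately the vertical and horizontal components of the right-hand side of \eqref{eq:nablaJ} and finds agreement, using $\varphi(\zeta)=0$ and the standard identity $J^2=-\Id+\zeta\otimes\zeta^\flat$.

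For the three ODEs, I would exploit $\nabla_\zeta\varphi=\varphi^2+L$ throughout. For the second equation, $\zeta(\trace\varphi)=\trace(\nabla_\zeta\varphi)=\trace(\varphi^2)+\trace L$, where $\trace L=\Ric(\zeta,\zeta)$ by definition, and Cayley--Hamilton applied to the restriction $\varphi|_{\mathcal{H}}$ (a $2\times 2$ operator) yields $\trace(\varphi^2)=(\trace\varphi)^2-2\det(\varphi)$. For the third equation, the Frobenius symmetry $\trace(AB^T)=\trace(BA^T)$ gives $\zeta(|\varphi|^2)=2\trace((\nabla_\zeta\varphi)\varphi^T)=2\trace((\varphi^2+L)\varphi^T)$, which is the stated identity after cyclic rearrangement of $\trace(L\varphi^T)=\trace(\varphi^T L)$. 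For the first equation, I would write
\[
\zeta(\trace(\varphi J))=\trace\bigl((\nabla_\zeta\varphi)J\bigr)+\trace\bigl(\varphi(\nabla_\zeta J)\bigr);
\]
the last term vanishes because \eqref{eq:nablaJ} with $X=\zeta$ reads $(\nabla_\zeta J)(Y)=-\langle J\varphi(\zeta),Y\rangle\zeta+\langle Y,\zeta\rangle J\varphi(\zeta)=0$ since $\varphi(\zeta)=0$. Then Cayley--Hamilton gives $\trace\bigl((\varphi^2+L)J\bigr)=\trace(\varphi)\trace(\varphi J)-\det(\varphi)\trace(J|_{\mathcal{H}})+\trace(LJ)$, and both extra terms vanish: $\trace(J|_{\mathcal{H}})=0$ by antisymmetry of $J$, while $\trace(LJ)=0$ because the Jacobi operator $L$ is symmetric on $\mathcal{H}$ (from the symmetries of the curvature tensor) and $J|_{\mathcal{H}}$ is skew-symmetric, so their product has zero trace.

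The main obstacle I expect is bookkeeping in the derivation of \eqref{eq:nablaJ}: one must consistently split each expression into its horizontal and vertical parts, keeping track of the fact that $J$ has been extended by $J(\zeta)=0$ rather than as a genuine almost complex structure on $TM$. Once \eqref{eq:nablaJ} is secured, the three ODEs follow mechanically from the Riccati equation, the vanishing $\varphi(\zeta)=0$, and elementary $2\times 2$ linear algebra.
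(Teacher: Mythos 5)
Your proposal is correct and follows essentially the same route as the paper: the identity \eqref{eq:nablaJ} by direct computation in an adapted frame, and then all three ODEs from the Riccati equation $\nabla_\zeta\varphi=\varphi^2+L$ together with $\nabla_\zeta J=0$, $\trace(LJ)=0$, and the Cayley--Hamilton identity for $2\times2$ operators on $\mathcal{H}$. The only difference is that you spell out the frame computation for \eqref{eq:nablaJ}, which the paper dismisses as ``straightforward.''
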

\begin{proof}
The identity \eqref{eq:nablaJ} follows by straightforward computations.
We begin by proving the first equation in \eqref{ode2dimen3}. 
From \eqref{eq:nablaJ} we observe that 
\[
\nabla_\zeta J = 0.
\]
Hence,
$$
\nabla_{\zeta}(\varphi J)=\varphi^2 +L J.
$$
Moreover, since $L$ is a symmetric tensor on $\mathcal{H}$, it follows that 
$$
\trace(L J)= 0.
$$
Keeping these facts in mind, and using \eqref{codazzi}, we obtain
\[
\zeta(\trace(\varphi J))
= \trace(\varphi^2 J)
= \trace(\varphi)\, \trace(\varphi J).
\]
To establish the second differential equation, first note that for any $2 \times 2$ matrix $\varphi$, we have
\[
\trace(\varphi^2) = \trace(\varphi)^2 - 2 \det(\varphi).
\] 
Using \eqref{codazzi}, we then have
\begin{eqnarray*}
\zeta(\trace(\varphi)) 
&\!\!\!=\!\!\!& \trace(\varphi^2 + L) 
= \trace(\varphi^2) + \Ric(\zeta, \zeta)\\
&\!\!\!=\!\!\!& \trace(\varphi)^2 - 2 \det(\varphi)+ \Ric(\zeta, \zeta).
\end{eqnarray*}
Finally, observe that
$$
\zeta(|\varphi|^2)=2\trace(\varphi^T\nabla_{\zeta}\varphi)=2\trace(\varphi^2\varphi^T+L \varphi^T).
$$
This completes the proof of the lemma.
\end{proof}

In the next lemma we compute the Laplacians of the second fundamental form and its trace.

\begin{lemma}\label{laplacians}
The following identities hold:
\begin{enumerate}[\rm(1)]
\item The Laplacian of $\trace(\varphi)$ is given by the formula
$$
\Delta\trace(\varphi)=|\varphi|^2\trace(\varphi)-2{\trace}(\varphi^2\varphi^T)
+\trace(\varphi^T(\Ric-2L))-\tfrac{1}{2}\zeta(\Scal).
$$
\item The Laplacian of the tensor $\varphi$ satisfies
\begin{eqnarray*}
(\nabla^*\nabla\varphi)(X)+\langle X,\nabla|\varphi|^2\rangle\zeta&\!\!\!=\!\!\!&|\varphi|^2\varphi(X)-\varphi(\Ric(X))
\\
&&-\sum_{i}(\nabla_{e_i}R)(X,e_i,\zeta)
+2\sum_{i}R(X,e_i)\varphi(e_i),
\end{eqnarray*}
for any $X\in\mathfrak{X}(M)$ and any local orthonormal frame $(e_i)$.
\end{enumerate}
\end{lemma}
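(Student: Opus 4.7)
The plan is to prove part (2), the Simons-type identity for $\varphi$, by iterating the Codazzi equation \eqref{codazzi}, the harmonicity relation \eqref{dz}, and the Ricci identity for second covariant derivatives of a $(1,1)$-tensor. Part (1) will then follow by tracing (2) and applying the once-contracted second Bianchi identity.

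\emph{Part (2).} Work at a point $x_0$ with a local orthonormal frame $(e_i)$ parallel at $x_0$, so that $(\nabla^*\nabla\varphi)(X) = -\sum_i(\nabla_{e_i}\nabla_{e_i}\varphi)(X)$ there. The first step is to use the Codazzi identity to swap the argument and the direction of the inner covariant derivative,
\[
(\nabla_{e_i}\varphi)(X) = (\nabla_X\varphi)(e_i) + R(X,e_i)\zeta,
\]
and then apply $\nabla_{e_i}$ once more. The result splits into two sums. For $\sum_i(\nabla_{e_i}\nabla_X\varphi)(e_i)$, commute covariant derivatives using the Ricci identity for the $(1,1)$-tensor $\varphi$ (so that the commutator curvature acts on both the contravariant and the covariant index of $\varphi$): the surviving piece $\sum_i(\nabla_X\nabla_{e_i}\varphi)(e_i)$ collapses, by the harmonicity identity $-\delta\varphi = |\varphi|^2\zeta$ of \eqref{dz}, to $\nabla_X(|\varphi|^2\zeta) = X(|\varphi|^2)\zeta - |\varphi|^2\varphi(X)$ (using $\nabla_X\zeta = -\varphi(X)$), and the Ricci contraction $\sum_i R(e_i,X)e_i = \Ric(X)$ gives the contribution $-\varphi(\Ric(X))$. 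For $\sum_i\nabla_{e_i}(R(X,e_i)\zeta)$, the Leibniz rule and $\nabla_{e_i}\zeta = -\varphi(e_i)$ produce $\sum_i(\nabla_{e_i}R)(X,e_i)\zeta - \sum_i R(X,e_i)\varphi(e_i)$. The $R(X,e_i)\varphi(e_i)$ contributions from the Ricci-identity commutator and from the Leibniz expansion then \emph{add} (rather than cancel), yielding the coefficient $2$ of the statement; collecting and rearranging gives (2).

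\emph{Part (1).} Take the trace of (2) in $X$. Since in the paper's convention $\nabla^*\nabla = -\trace\nabla^2$, the trace of the left-hand side equals $\Delta\trace\varphi + \zeta(|\varphi|^2)$. On the right, the trace of $|\varphi|^2\varphi(X)$ is $|\varphi|^2\trace\varphi$; by symmetry of $\Ric$, the trace of $-\varphi(\Ric(X))$ equals $-\trace(\varphi^T\Ric)$; the trace of $2\sum_i R(X,e_i)\varphi(e_i)$ reduces, via the definition of $\Ric$, to $2\trace(\varphi^T\Ric)$; and the trace of $\sum_i(\nabla_{e_i}R)(X,e_i,\zeta)$ collapses, via Riemann symmetries and the once-contracted second Bianchi identity, to $\tfrac{1}{2}\zeta(\Scal)$. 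Substituting $\zeta(|\varphi|^2) = 2\trace(\varphi^2\varphi^T + \varphi^T L)$ from Lemma \ref{odes} and isolating $\Delta\trace\varphi$ produces the claim. (An alternative direct route is to take the divergence of $d\trace\varphi = \Ric(\zeta) - \delta\varphi^T$ from Lemma \ref{codvarphitra}, using $\operatorname{div}\Ric = \tfrac{1}{2}d\Scal$ and $\nabla\zeta = -\varphi$, and then commuting derivatives in $\operatorname{div}(\delta\varphi^T)$.)

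\emph{Main obstacle.} The bookkeeping in part (2) is the delicate step: three independent curvature contributions --- the Ricci-identity commutator (which acts as a derivation on both indices of $\varphi$, producing \emph{both} a $R(\cdot,X)\varphi(\cdot)$ term and the $\varphi(\Ric(X))$ term), the Codazzi remainder $R(X,e_i)\zeta$, and its covariant derivative $(\nabla R)$ --- have to be combined with the correct signs. The coefficient $2$ in front of $R(X,e_i)\varphi(e_i)$ arises from adding two \emph{a priori} independent contributions, and with a wrong sign convention for the Ricci identity on $(1,1)$-tensors these two would cancel instead. Once (2) is established, part (1) reduces to a careful but routine contraction together with one application of the second Bianchi identity.
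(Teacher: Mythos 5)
Your proposal is correct, and I verified that both halves close up with the right signs and coefficients. For part (2) you are doing essentially the paper's computation, just read in the opposite direction: the paper differentiates the harmonicity identity $\delta\varphi=-|\varphi|^2\zeta$ of \eqref{dz} in the direction $X$, commutes $\nabla_X\nabla_{e_i}$ past $\nabla_{e_i}\nabla_X$, and uses the Codazzi equation \eqref{codazzi} to land on $\nabla^*\nabla\varphi$; you start from $\nabla^*\nabla\varphi$, apply Codazzi first, and collapse onto $\nabla_X(\delta\varphi)$. The ingredients are identical, and the coefficient $2$ of $\sum_i R(X,e_i)\varphi(e_i)$ arises in both versions from the same two additive contributions (the commutator acting on the contravariant slot of $\varphi$, and the Leibniz term $R(X,e_i)\nabla_{e_i}\zeta$ coming from the Codazzi remainder), exactly as you describe.

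Where you genuinely diverge is part (1). The paper does \emph{not} obtain it by tracing (2): it observes that $\trace(\varphi)=\delta\zeta$, uses the commutation $\Delta\delta=\delta\Delta$ of the Hodge Laplacian with the codifferential together with the Weitzenb\"ock formula $\Delta\zeta=\nabla^*\nabla\zeta+\Ric(\zeta)$ and the harmonic equation, and then evaluates $\delta(\Ric(\zeta))$ via the contracted second Bianchi identity. Your trace of (2) gives the same formula --- the terms $-\trace(\varphi^T\Ric)$ and $+2\trace(\varphi^T\Ric)$ combine to the $+\trace(\varphi^T\Ric)$ in the statement, the $\nabla R$ term contracts to $-\tfrac12\zeta(\Scal)$, and substituting $\zeta(|\varphi|^2)=2\trace(\varphi^2\varphi^T+\varphi^TL)$ from Lemma \ref{odes} produces the $-2\trace(\varphi^2\varphi^T)-2\trace(\varphi^TL)$ --- so your (1) becomes a corollary of (2), at the cost of making the easier identity depend on the harder one. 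The paper's Hodge-theoretic route is shorter and logically independent of (2), but both need the same two external inputs (the ODE for $\zeta(|\varphi|^2)$ and the contracted Bianchi identity), so neither buys a real saving in bookkeeping. Your parenthetical alternative via $\delta\varphi^T=\Ric(\zeta)-d(\trace\varphi)$ from Lemma \ref{codvarphitra} is also viable and is closest in spirit to what the paper actually does.
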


\begin{proof} Consider a local orthonormal frame $(e_i)$ on $M$.
\begin{enumerate}[\rm(1)]
\item Observe first that the trace of $\varphi$ is simply the divergence of the vector field $\zeta$, i.e.,
\[
\trace(\varphi) = -\sum_i \langle \nabla_{e_i} \zeta, e_i \rangle = \delta \zeta.
\]
Taking the Laplacian $\Delta$ on both sides of the above equation, and using the commutativity of $\Delta$ with $\delta$,
together with the Weitzenb\"ock formula \cite[Theorem 9.4.1]{petersen} for the Hodge Laplacian and the third equation of \eqref{ode2dimen3}, we obtain
\begin{eqnarray}\label{f1}
\Delta \trace(\varphi)
&\!\!\!=\!\!\!& \Delta(\delta \zeta)
= \delta(\Delta \zeta)
= \delta(\nabla^* \nabla \zeta + \Ric(\zeta)) \nonumber\\
&\!\!\!=\!\!\!& \delta(|\varphi|^2 \zeta + \Ric(\zeta))
= |\varphi|^2 (\delta \zeta) - \zeta(|\varphi|^2) + \delta(\Ric(\zeta)) \nonumber\\
&\!\!\!=\!\!\!& |\varphi|^2 \trace(\varphi)
- 2 \trace(\varphi^2 \varphi^T)
- 2 \trace(\varphi^T L)
+ \delta(\Ric(\zeta)).
\end{eqnarray}

On the other hand,
\begin{eqnarray*}
2\,\delta(\Ric(\zeta))
&\!\!\!=\!\!\!& -2 \sum_i \langle \nabla_{e_i} \Ric(\zeta), e_i \rangle\\
&\!\!\!=\!\!\!& -2 \sum_i \langle (\nabla_{e_i} \Ric)(\zeta), e_i \rangle
+ 2 \sum_i \langle \varphi(e_i), \Ric(e_i) \rangle.
\end{eqnarray*}
Hence,
\begin{equation}\label{f2}
2\,\delta(\Ric(\zeta))
= 2 (\delta \Ric)(\zeta) + 2 \trace(\varphi^T \Ric)
= -\zeta(\Scal) + 2 \trace(\varphi^T \Ric).
\end{equation}
Combining \eqref{f1} with \eqref{f2}, we obtain the desired identity.
\smallskip
\item
Consider a vector field $X$ defined in a neighborhood of a fixed point $x_0 \in M$.
Without loss of generality, we may assume that at $x_0$ we have
$$\nabla X=\nabla e_i=0.$$
Differentiating \eqref{dz} with respect to $X$, using \eqref{codazzi}, and evaluating at $x_0$, we obtain
\begin{eqnarray*}
&&\hspace{-30pt}\nabla_X(|\varphi|^2\zeta)\\
&\!\!\!=\!\!\!&\sum_{i}\nabla_X\big(\nabla_{e_i}\varphi(e_i)-\varphi(\nabla_{e_i}e_i)\big)=\sum_{i}\big(\nabla_X\nabla_{e_i}\varphi(e_i)-\nabla_X\varphi(\nabla_{e_i}e_i)\big)\\
&\!\!\!=\!\!\!&\sum_{i}\big(R(X,e_i)\varphi(e_i)+\nabla_{e_i}\nabla_X\varphi(e_i)
-\varphi(\nabla_X\nabla_{e_i}e_i)\big)\\
&\!\!\!=\!\!\!&\sum_{i}R(X,e_i)\varphi(e_i)
+\sum_{i}\nabla_{e_i}\big((\nabla_X\varphi)(e_i)+\varphi(\nabla_Xe_i)\big)
-\sum_{i}\varphi(\nabla_X\nabla_{e_i}e_i)\\
&\!\!\!=\!\!\!&\sum_{i}R(X,e_i)\varphi(e_i)
+\sum_{i}\nabla_{e_i}\big((\nabla_{e_i}\varphi)(X)-R(X,e_i)\zeta\big)
+\sum_{i}\varphi(R(e_i,X,e_i))\\
&\!\!\!=\!\!\!&\sum_{i}R(X,e_i)\varphi(e_i)-\sum_{i}\nabla_{e_i}R(X,e_i)\zeta
-(\nabla^*\nabla\varphi)(X)
-\varphi(\Ric(X)).
\end{eqnarray*}
Consequently,
\begin{eqnarray*}
(\nabla^*\nabla\varphi)(X)&\!\!\!=\!\!\!&
-\sum_{i}(\nabla_{e_i}R)(X,e_i,\zeta)
+2\sum_{i}R(X,e_i)\varphi(e_i)\\
&&-\varphi(\Ric(X))-\langle X,\nabla|\varphi|^2\rangle\zeta+|\varphi|^2\varphi(X).
\end{eqnarray*}
\end{enumerate}
This completes the proof of lemma.
\end{proof}

In what follows, we shall restrict to the case where $\zeta$ is a harmonic unit vector field that is also an eigenvector of the Ricci tensor with constant eigenvalue, i.e.,
\[
\Ric(\zeta) = \lambda \zeta,
\]
for some $\lambda \in \mathbb{R}$.
The following lemma is of crucial importance for the proofs of our main results.
\begin{lemma}\label{laplacvarphij}
If $\Ric(\zeta)=\lambda \zeta$, for some $\lambda\in \mathbb{R}$, then $\trace(\varphi J)$ is a harmonic function, i.e.,
$$\Delta(\trace(\varphi J))=0.$$
In particular, if $M$ is compact, then $\trace(\varphi J)$ is constant.
\end{lemma}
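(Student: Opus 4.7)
The plan is to exploit the three-dimensionality of $M$ in order to identify the antisymmetric part of the second fundamental form with a scalar multiple of $J$ and then convert the claim into a divergence identity. Since the horizontal distribution is two-dimensional, every antisymmetric endomorphism of $\mathcal{H}$ is a real multiple of $J$; combined with $\varphi(\zeta)=0$ (totally geodesic fibers) and the consequent $\varphi^{T}(\zeta)=0$, this yields the global identity
\[
\tilde S \,:=\, \varphi - \varphi^{T} \,=\, -\,u\,J \qquad \text{on } TM,
\]
where $u:=\trace(\varphi J)$. I would then compute $\delta\tilde S$ in two complementary ways, extract a clean formula for $\nabla u$, and check directly that its divergence vanishes.

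On one hand, Lemma~\ref{codvarphitra}(1), the harmonicity relation~\eqref{dz}, and the hypothesis $\Ric(\zeta)=\lambda\zeta$ combine to give
\[
\delta\tilde S \,=\, \delta\varphi-\delta\varphi^{T} \,=\, -(|\varphi|^{2}+\lambda)\,\zeta \,+\, \nabla\trace(\varphi).
\]
On the other hand, expanding $\delta(uJ)$ by Leibniz and using~\eqref{eq:nablaJ}, the algebraic identity
\[
\sum_{i}(\nabla_{e_{i}}J)(e_{i}) \,=\, -\,u\,\zeta,
\]
which rests on $\varphi(\zeta)=0$ and $\trace(J\varphi)=u$, produces $\delta(uJ)=-J\nabla u+u^{2}\zeta$. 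Equating $\delta\tilde S=-\delta(uJ)$ and splitting into horizontal and vertical parts (using $J\zeta=0$), the vertical piece reproduces equation~\eqref{ode2dimen3} for $\zeta(\trace\varphi)$, while the horizontal piece gives $J\nabla u=(\nabla\trace\varphi)^{\mathcal{H}}$. Together with $\zeta(u)=u\,\trace(\varphi)$ from the first line of~\eqref{ode2dimen3}, this collapses to the concise identity
\[
\nabla u \,=\, -\,J\,\nabla\trace(\varphi) \,+\, u\,\trace(\varphi)\,\zeta.
\]

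Finally, I would compute $\operatorname{div}(\nabla u)$ term by term. For $\operatorname{div}(-J\nabla\trace\varphi)$, the ``principal'' contribution $-\sum_{a}\langle J\nabla_{e_{a}}\!\nabla\trace\varphi,\,e_{a}\rangle$ vanishes because the Hessian of $\trace(\varphi)$ is symmetric and $J$ is antisymmetric, while the ``connection'' term contributes $-u\,\zeta(\trace\varphi)$ via~\eqref{eq:nablaJ} (using $\trace(J\varphi)=u$ and $\varphi(\zeta)=0$). For $\operatorname{div}(u\trace\varphi\cdot\zeta)$, expanding and using $\operatorname{div}(\zeta)=-\trace\varphi$ together with $\zeta(u)=u\,\trace\varphi$ yields exactly $+u\,\zeta(\trace\varphi)$. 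The two contributions cancel, so $\operatorname{div}(\nabla u)=0$ and hence $\Delta u=0$; on compact $M$ the maximum principle then forces $u$ to be constant.

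The main obstacle is the bookkeeping in computing $\delta(uJ)$: there is no immediate product rule for the codifferential of a $(1,1)$-tensor, so one must expand by Leibniz and handle the nontrivial derivative~\eqref{eq:nablaJ} of $J$ with care. The key step is the identity $\sum_{i}(\nabla_{e_{i}}J)(e_{i})=-u\,\zeta$, which is precisely the place where $\varphi(\zeta)=0$ enters essentially; once this is established, the two expressions for $\delta\tilde S$ match term-by-term and the final cancellation in the divergence computation falls out cleanly.
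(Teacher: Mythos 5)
Your proof is correct, but it follows a genuinely different route from the paper. The paper computes $\Delta\langle\varphi,J\rangle=\langle\nabla^*\nabla\varphi,J\rangle-2\langle\nabla\varphi,\nabla J\rangle+\langle\varphi,\nabla^*\nabla J\rangle$ and evaluates each term separately: this requires the full Weitzenb\"ock-type formula for $\nabla^*\nabla\varphi$ from Lemma~\ref{laplacians}(2), the contracted second Bianchi identity for $\delta R$, and the three-dimensional expression of the curvature operator through the Ricci tensor, before the three contributions $|\varphi|^2\langle\varphi,J\rangle-2|\varphi|^2\langle\varphi,J\rangle+|\varphi|^2\langle\varphi,J\rangle$ cancel. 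You instead exploit $\dim\mathcal{H}=2$ to write $\tilde S=\varphi-\varphi^T=-uJ$ with $u=\trace(\varphi J)$, compare the two available first-order expressions for $\delta\tilde S$ (namely $\delta\varphi=-|\varphi|^2\zeta$ and $\delta\varphi^T=\Ric(\zeta)-d\trace(\varphi)$ from Lemma~\ref{codvarphitra}(1) against the Leibniz expansion of $\delta(uJ)$ via \eqref{eq:nablaJ}), and extract the closed formula $\nabla u=-J\nabla\trace(\varphi)+u\trace(\varphi)\,\zeta$, whose divergence vanishes by the symmetry of the Hessian together with the exact cancellation of the two $u\,\zeta(\trace\varphi)$ terms. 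I verified the key identities: $\tilde S(\zeta)=0$ and antisymmetry do force $\tilde S=-uJ$; $\sum_i(\nabla_{e_i}J)(e_i)=-u\zeta$ follows from \eqref{eq:nablaJ} and $\varphi(\zeta)=0$; the vertical component of your comparison is consistent with the second equation of \eqref{ode2dimen3} via $\trace(\varphi^2)=|\varphi|^2-u^2$; and the hypothesis $\Ric(\zeta)=\lambda\zeta$ enters exactly where it must, to kill the horizontal component of $\Ric(\zeta)$ in $\delta\varphi^T$ (without it an extra term $-J\,\Ric(\zeta)^{\mathcal H}$ would survive in $\nabla u$). Your argument is entirely first-order in $\varphi$, avoids the second Bianchi identity and the rough Laplacian of $\varphi$ altogether, and yields the explicit gradient formula for $\trace(\varphi J)$ as a by-product; the paper's Bochner computation is heavier but reuses machinery (Lemma~\ref{laplacians}) needed elsewhere in the article.
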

\begin{proof}
The Laplacian of the function
$$\trace(\varphi J)=-\langle\varphi,J\rangle,$$
is given by the formula
$$\Delta(\langle\varphi,J\rangle)=\langle\nabla^*\nabla\varphi,J\rangle-2\langle\nabla\varphi,\nabla J\rangle+\langle\varphi,\nabla^*\nabla J\rangle.$$
Let us compute each term separately. 
Choose a local orthonormal frame $(e_i)$ and assume that it is parallel at some fixed point $x_0 \in M$, i.e.,
$$
\nabla e_i|_{x_0}=0.
$$ 
Suppose further that $X \in \mathfrak{X}(M)$ is a vector field defined in a neighborhood of $x_0$ and parallel at $x_0$. 
Differentiating, evaluating at $x_0$, and keeping in mind equation~\eqref{eq:nablaJ}, we obtain
$$
    (\nabla^*\nabla J)(X)=-\sum_{j}\nabla_{e_j}\big\{(\nabla_{e_j}J)(X)\big\}
    =\sum_{j}\nabla_{e_j}\big\{\langle J\varphi(e_j),X\rangle\zeta-\langle X,\zeta\rangle J\varphi(e_j)\big\},
$$    
from where we see that    
\begin{eqnarray*}
(\nabla^*\nabla J)(X)&\!\!\!=\!\!\!&-\big\{(\delta (J\varphi))(X)\big\}\zeta+\varphi\varphi^TJ(X)+J\varphi\varphi^T(X)+\langle X,\zeta\rangle \delta (J\varphi)\\
&\!\!\!=\!\!\!&-\big\{(\delta (J\varphi))(X)\big\}\zeta+|\varphi|^2J(X)+\langle X,\zeta\rangle \delta (J\varphi).
\end{eqnarray*}
In the last equation, we use the fact that
$$AJ+JA=\trace(A) J$$
for any symmetric matrix $A$.
We claim that $\delta (J\varphi)$ vanishes. Indeed, in view of \eqref{eq:nablaJ} and the facts
$$
\delta\varphi=-|\varphi|^2\zeta$$
and
$$J(\zeta)=0,$$
we have
$$-\delta(J\varphi)=\sum_j(\nabla_{e_j}J\varphi)(e_j)=\sum_j (\nabla_{e_j}J)(\varphi e_j)-J(\delta\varphi)=0.$$
Consequently,
\begin{equation}\label{laplacianJ}
\nabla^*\nabla J=|\varphi|^2J.
\end{equation}
Using the identity \eqref{eq:nablaJ}, we deduce
\begin{eqnarray*}
\langle\nabla\varphi,\nabla J\rangle&\!\!\!=\!\!\!&\sum_{j,k} \langle(\nabla_{e_j}\varphi)(e_k),(\nabla_{e_j} J)(e_k)\rangle\\
&\!\!\!=\!\!\!&-\sum_j\langle(\nabla_{e_j}\varphi)(J\varphi (e_j)),\zeta\rangle+\sum_{j}\langle(\nabla_{e_j}\varphi)(\zeta),J\varphi (e_j)\rangle\\
&\!\!\!=\!\!\!&-\sum_j\langle J\varphi (e_j),(\nabla_{e_j}\varphi^T)(\zeta)\rangle+\sum_{j}\langle(\nabla_{e_j}\varphi)(\zeta),J\varphi (e_j)\rangle.
\end{eqnarray*}
Because
$$\varphi(\zeta)=\varphi^T(\zeta)=0,$$
we see that
\begin{equation}\label{gradfj}
\langle\nabla\varphi,\nabla J\rangle=-\langle J\varphi,\varphi^T\varphi\rangle+\langle\varphi^2,J\varphi\rangle=|\varphi|^2\langle\varphi,J\rangle.
\end{equation}
To compute the terms involving $\nabla^*\nabla\varphi$, we use the expression in Lemma \ref{laplacians}(2).
Moreover, we will use the well-known identity
$$
(\delta R)(X,\zeta,Y)=\sum_i(\nabla_{e_i}R)(X,e_i,\zeta,Y)=(\nabla_\zeta \Ric)(X,Y)-(\nabla_Y \Ric)(\zeta, X),
$$
which is a consequence of the second Bianchi identity; see, for example, \cite[Exercise 3.4.8.]{petersen}.

We compute:  
\begin{eqnarray}\label{nonumber}
\langle\nabla^*\nabla\varphi,J\rangle&\!\!\!=\!\!\!&\sum_j\langle(\nabla^*\nabla\varphi)(e_j),J(e_j)\rangle\nonumber\\
&\!\!\!=\!\!\!&|\varphi|^2\sum_{j}\langle \varphi(e_j),J(e_j)\rangle-\sum_{j}\langle\varphi\Ric(e_j),J(e_j)\rangle\nonumber\\
&&-\sum_{i,j}\langle(\nabla_{e_i} R)(e_j,e_i,\zeta),J(e_j)\rangle+2\sum_{i,j}\langle R(e_j,e_i)\varphi(e_i),J(e_j)\rangle\nonumber\\
&\!\!\!=\!\!\!&|\varphi|^2\sum_{j}\langle \varphi(e_j),J(e_j)\rangle-\sum_{j}\langle\varphi\Ric(e_j),J(e_j)\rangle\nonumber\\
&&-\sum_j(\delta R)(e_j,\zeta,J(e_j))+2\sum_{i,j}\langle R(e_j,e_i)\varphi(e_i),J(e_j)\rangle\nonumber\\
&\!\!\!=\!\!\!&|\varphi|^2\sum_{j}\langle \varphi(e_j),J(e_j)\rangle-\sum_{j}\langle\varphi\Ric(e_j),J(e_j)\rangle\nonumber\\
&&-\sum_j(\nabla_\zeta\Ric)(Je_j,e_j)+\sum_j(\nabla_{J(e_j)}\Ric)(\zeta,e_j)\nonumber\\
&&+2\sum_{i,j}\langle R(e_j,e_i)\varphi(e_i),J(e_j)\rangle.
\end{eqnarray}
Because the Ricci tensor is symmetric, $J$ is antisymmetric on $\mathcal{H}$, vanishes on $\mathcal{V}$, and $\nabla_\zeta J = 0$, a direct computation shows that the third sum in \eqref{nonumber} vanishes. By differentiating the equation $\Ric \, \zeta = \lambda \zeta$, the fourth sum in \eqref{nonumber} is equal to $\lambda \langle \varphi, J \rangle + \langle \Ric, \varphi J \rangle$. Finally, using the fact
$$\langle R(e_1,e_2)e_2,e_1\rangle=\frac{\Ric(e_1,e_1)+\Ric(e_2,e_2)-\Ric(\zeta,\zeta)}{2}=\frac{\scal-2\lambda}{2},$$
the last sum can be easily shown to be equal to 
$$\langle\varphi,J\rangle R(e_1,e_2,e_2,e_1)=\langle\varphi,J\rangle\frac{\Scal-2\lambda}{2}.$$
Since
$$(\varphi-\varphi^T)J=\trace(\varphi J)\Id|_{\mathcal{H}}=-\langle \varphi,J\rangle \Id|_{\mathcal{H}},$$
we arrive at the conclusion
\begin{eqnarray}\label{laplacianf}
\langle\nabla^*\nabla\varphi,J\rangle&\!\!\!=\!\!\!&|\varphi|^2\langle \varphi,J\rangle-\langle\varphi\Ric,J\rangle+\lambda\langle \varphi,J\rangle+\langle \Ric,\varphi J\rangle+\langle \varphi, J\rangle(\Scal-2\lambda)\nonumber\\
&\!\!\!=\!\!\!&|\varphi|^2\langle \varphi,J\rangle+\langle \Ric,(\varphi-\varphi^T) J\rangle+\langle \varphi,J\rangle(\Scal-\lambda)\nonumber\\
&\!\!\!=\!\!\!&|\varphi|^2\langle \varphi,J\rangle.
\end{eqnarray}
Combining the equations \eqref{laplacianJ}, \eqref{gradfj} and \eqref{laplacianf}, we deduce that
$\trace(\varphi J)$ is a harmonic function.
\end{proof}

\begin{lemma}\label{36}
If $\Ric(\zeta)=\lambda \zeta$ for some $\lambda\in \mathbb{R}$, then we have 
$$\Delta \trace(\varphi) = \trace(\varphi)\left(2\det\varphi-|\varphi|^2+\Scal-3\lambda\right)-\zeta(\Scal).$$
In particular, if $M$ is compact and $\Scal\leq 3\lambda$, we get that $\zeta$ must be divergence-free. 
\end{lemma}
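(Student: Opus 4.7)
The plan is to derive the main identity by systematically reducing each term appearing in the general Laplacian formula of Lemma~\ref{laplacians}(1), using the three-dimensional setting together with the eigenvalue condition $\Ric(\zeta)=\lambda\zeta$. The reduction proceeds in three algebraic moves. First, applying Cayley--Hamilton to the 2-dimensional restriction $\varphi|_{\mathcal{H}}$ gives $\varphi^{2}=\trace(\varphi)\,\varphi-\det(\varphi)\Id$ on $\mathcal{H}$, so that $\trace(\varphi^{2}\varphi^T)=\trace(\varphi)(|\varphi|^{2}-\det\varphi)$. Second, in dimension three the full Riemann tensor is algebraically determined by the Ricci tensor; inserting $\Ric(\zeta)=\lambda\zeta$ into this expression for $R(X,\zeta)\zeta$ yields
\[
L\big|_{\mathcal{H}}=\Ric\big|_{\mathcal{H}}+\Bigl(\lambda-\tfrac{\Scal}{2}\Bigr)\Id_{\mathcal{H}},
\]
hence $\trace\bigl(\varphi^T(\Ric-2L)\bigr)=-\trace(\varphi^T\Ric)+(\Scal-2\lambda)\trace\varphi$. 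Third, to eliminate the residual $\trace(\varphi^T\Ric)$, I would take the divergence of $\Ric(\zeta)=\lambda\zeta$ to obtain $\delta(\Ric(\zeta))=\lambda\,\delta\zeta=\lambda\,\trace\varphi$, and compare with the contracted-Bianchi identity $2\delta(\Ric(\zeta))=2\trace(\varphi^T\Ric)-\zeta(\Scal)$ already appearing in the proof of Lemma~\ref{laplacians}(1). This gives $\trace(\varphi^T\Ric)=\lambda\,\trace\varphi+\tfrac{1}{2}\zeta(\Scal)$. Substituting these three reductions into the formula of Lemma~\ref{laplacians}(1) and collecting terms produces the announced identity.

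For the divergence-free conclusion I set $f:=\trace\varphi$ and exploit the elementary two-dimensional algebraic identity
\[
2\det\varphi-|\varphi|^{2}=-(a-d)^{2}-(b+c)^{2}\leq 0,
\]
obtained by writing $\varphi|_{\mathcal{H}}=\bigl(\begin{smallmatrix}a & b\\ c & d\end{smallmatrix}\bigr)$ in an orthonormal basis of $\mathcal{H}$. Under $\Scal\leq 3\lambda$ the coefficient multiplying $f$ in the main identity is therefore globally non-positive. Integrating over the compact $M$, using Stokes ($\int_{M}\Delta f\,d\mu=0$), integration by parts ($\int_{M}\zeta(\Scal)\,d\mu=\int_{M}\Scal\,f\,d\mu$), and $\int_{M}f\,d\mu=-\int_{M}\div\zeta\,d\mu=0$, yields an integral constraint of the form $\int_{M}fW\,d\mu=0$ with $W\leq 0$. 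Combining this with the ODE $\zeta(f)=f^{2}-2\det\varphi+\lambda$ from Lemma~\ref{odes} and the 2D bound $|\varphi|^{2}\geq\tfrac{1}{2}f^{2}$ then forces $f\equiv 0$.

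The main obstacle I expect is this last implication. The integrand $fW$ has sign opposite to $f$, while $f$ itself must change sign whenever it is not identically zero (because $\int_{M}f\,d\mu=0$), so the pointwise sign structure alone does not immediately force $f\equiv 0$. Resolving this requires coordinating the two non-positive contributions $\Scal-3\lambda\leq 0$ and $2\det\varphi-|\varphi|^{2}\leq 0$ with the ODE controlling the $\zeta$-derivative of $f$, so as to extract a sharp global inequality that pins $f$ down.
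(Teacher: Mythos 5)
Your derivation of the identity itself is correct and follows essentially the paper's route: both arguments start from Lemma~\ref{laplacians}(1) and reduce the curvature-dependent terms using $\Ric(\zeta)=\lambda\zeta$. Your use of the three-dimensional curvature decomposition to get $L|_{\mathcal{H}}=\Ric|_{\mathcal{H}}+(\lambda-\tfrac{\Scal}{2})\Id_{\mathcal{H}}$ is a clean equivalent of the paper's computation of $2\trace(\varphi^TL)=2\langle\Ric,\varphi\rangle-\trace(\varphi)(\Scal-2\lambda)$ in a frame diagonalizing $S$, and your Cayley--Hamilton step and the elimination of $\trace(\varphi^T\Ric)$ via the contracted Bianchi identity \eqref{f2} coincide with what the paper does.

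The divergence-free conclusion is where your proposal has a genuine gap, and you have diagnosed it yourself. Integrating the identity over $M$ only yields $\int_M \trace(\varphi)\,W\,d\mu=0$, which carries no information because $\trace(\varphi)$ has zero mean and therefore changes sign unless it already vanishes; moreover, after your integration by parts the $\Scal$ contributions cancel and the weight becomes $2\det\varphi-|\varphi|^2-3\lambda$, which is nonpositive only when $\lambda\ge0$ --- not among the hypotheses. The missing idea is not a finer coordination of the ODE with the integral constraint, but a maximum-principle argument applied to the \emph{square} of the divergence. Setting $f=\trace(\varphi)$, one computes
$$\Delta(f^2)=2f\,\Delta f-2|\nabla f|^2=2f^2\bigl(2\det\varphi-|\varphi|^2+\Scal-3\lambda\bigr)-2f\,\zeta(\Scal)-2|\nabla f|^2 .$$
Under $\Scal\le 3\lambda$ and your algebraic inequality $2\det\varphi-|\varphi|^2\le0$, the first and last terms are nonpositive, so $f^2$ is subharmonic, hence constant on the compact $M$ by the maximum principle; then $f$ is constant, and $\int_M f\,d\mu=-\int_M\operatorname{div}\zeta\,d\mu=0$ forces $f\equiv0$. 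This is exactly the paper's argument (following \cite{fourtzis}), and it is the step your proposal does not supply. One caveat worth recording: the paper's displayed inequality silently omits the cross term $-2f\,\zeta(\Scal)$, which has no a priori sign; the argument is airtight whenever $\Scal$ is constant along $\zeta$ (which covers the space-form applications), but in general that term must also be controlled before concluding $\Delta(f^2)\le0$.
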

\begin{proof}
Recall by \eqref{f1} that
$$\Delta\trace(\varphi)=|\varphi|^2\trace(\varphi)-2\trace{(\varphi^2\varphi^T)}-2\trace(\varphi^TL)+\delta(\Ric(\zeta)).$$
Consider a local orthonormal frame $(e_1,e_2,\zeta)$ on $M$ that consists of eigenvectors of $S$, i.e.,
$$S(e_1)=\beta_1 e_1,\quad S(e_2)=\beta_2e_2\quad\text{and}\quad S(\zeta)=0.$$
Let us compute the last three terms of the last identity separately. We have
\begin{eqnarray}\label{eq:tracevarphiS}
2\trace(\varphi^TL)&\!\!\!=\!\!\!&\langle L,S\rangle
=\big(\beta_1\langle L(e_1),e_1\rangle+\beta_2\langle L(e_2),e_2\rangle\big)\nonumber\\
&\!\!\!=\!\!\!&\big(\beta_1\langle R(e_1,\zeta)\zeta,e_1\rangle+\beta_2\langle R(e_2,\zeta)\zeta,e_2\rangle\big)\nonumber\\
&\!\!\!=\!\!\!&\big(\beta_1\Ric(e_1,e_1)-\beta_1\langle R(e_1,e_2)e_2,e_1\rangle+\beta_2\Ric(e_2,e_2)-\beta_2\langle R(e_2,e_1)e_1,e_2\rangle\big)\nonumber\\
&\!\!\!=\!\!\!&\big(\langle \Ric,S\rangle-\trace(S)\langle R(e_1,e_2)e_2,e_1\rangle\big)\nonumber\\
&\!\!\!=\!\!\!&2\langle \Ric,\varphi\rangle-\trace(\varphi)\big(\Scal-2\lambda\big),
\end{eqnarray}
where in the last equality, we use the identity
$$\langle R(e_1,e_2)e_2,e_1\rangle=\frac{\Ric(e_1,e_1)+\Ric(e_2,e_2)-\Ric(\zeta,\zeta)}{2}
=\frac{\scal-2\lambda}{2}.$$
Replacing \eqref{eq:tracevarphiS} into the expression of the Laplacian, using equation \eqref{f2} and the facts $$\Ric(\zeta) = \lambda \zeta\quad\text{and}\quad \delta \zeta = \trace(\varphi),$$
we see that
\begin{eqnarray*}
\Delta \trace(\varphi) &=& |\varphi|^2 \trace(\varphi) - 2 \trace(\varphi^2 \varphi^T) - 2 \trace(\varphi^T \Ric) + \trace(\varphi)(\Scal - 2\lambda) + \delta(\Ric(\zeta)) \\
&=& |\varphi|^2 \trace(\varphi) - 2 \trace(\varphi^2 \varphi^T) - \zeta(\Scal) + \trace(\varphi)(\Scal - 2\lambda) - \delta(\Ric(\zeta)) \\
&=& |\varphi|^2 \trace(\varphi) - 2 \trace(\varphi^2 \varphi^T) - \zeta(\Scal) + \trace(\varphi)(\Scal - 3\lambda).
\end{eqnarray*}
Finally, using the algebraic identity
\[
|\varphi|^2 \trace(\varphi) - 2 \trace(\varphi^2 \varphi^T) = \trace(\varphi) \big( 2 \det \varphi - |\varphi|^2 \big),
\]
we derive the required equation. To show the last part, we proceed as in \cite{fourtzis}. Since
\[
2 \det \varphi - |\varphi|^2 \leq 0,
\]
we obtain
\[
\Delta (\trace(\varphi)^2) = 2 \trace(\varphi)^2 \big( 2 \det \varphi - |\varphi|^2 + \Scal - 3\lambda \big) - 2 |\nabla (\trace(\varphi))|^2 \leq 0.
\]
Hence, as $M$ is compact, $\trace(\varphi)$ must be constant. But
\[
\trace(\varphi) = \delta \zeta,
\]
so its integral over $M$ is zero by Stokes' theorem. Therefore, this constant must be zero. This completes the proof.
\end{proof}

Next, we are going to prove the key point of Theorem \ref{thmglobal}.
\begin{lemma}\label{prop:threedimecase}
Let $\zeta$ be a divergence-free harmonic unit vector field with totally geodesic integral curves on $M$.  
If $\Ric(\zeta) = \lambda \zeta$ for some $\lambda \in \mathbb{R}$, then $|\varphi|$ is constant. Moreover, by letting 
\[
b^2 = \frac{|\varphi|^2 + \lambda}{4} \quad \text{and} \quad \lambda_1^2 = |\varphi|^2 - \lambda,
\]
either $\zeta$ is  Killing (in this case $\zeta$ is parallel, or up to a conformal change of the metric depending on $b$, the manifold $M$ is Sasakian) or, 
we have the following cases:
\begin{enumerate}[\rm(1)]
\item Either $b = 0$ and hence $\Scal = \lambda = \text{\rm constant}$, or;
\smallskip
\item $b \neq 0$, in which case $\Scal$ must be constant and, around each point, there exists a (local) orthonormal frame $(\zeta_1 = \zeta, \zeta_2, \zeta_3)$ such that
\[
\begin{array}{rcl}
[\zeta_1, \zeta_2] &=& \Big(-\dfrac{\Scal - \lambda}{4b} + \dfrac{\lambda_1}{2} - b\Big) \zeta_3, \\[0.8em]
[\zeta_1, \zeta_3] &=& \Big(\dfrac{\Scal - \lambda}{4b} + \dfrac{\lambda_1}{2} + b\Big) \zeta_2, \\[0.8em]
[\zeta_2, \zeta_3] &=& -2b\,\zeta_1.
\end{array}
\]
In this case, the Ricci tensor in the basis $(\zeta_1 = \zeta, \zeta_2, \zeta_3)$ is given by
\begin{equation}\label{Riccifinal}
\Ric =
\begin{pmatrix}
\lambda & 0 & 0 \\
0 & \dfrac{\Scal - \lambda}{2} \Big(1 + \dfrac{\lambda_1}{2b}\Big) & 0 \\
0 & 0 & \dfrac{\Scal - \lambda}{2} \Big(1 - \dfrac{\lambda_1}{2b}\Big)
\end{pmatrix}.
\end{equation}
\end{enumerate}
\end{lemma}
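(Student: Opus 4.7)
My first goal is to establish constancy of $|\varphi|^2$. Since $\zeta$ is divergence-free, $\trace(\varphi) = 0$, so the second equation in \eqref{ode2dimen3} forces $\trace(\varphi^2) = -\Ric(\zeta,\zeta) = -\lambda$, equivalently $\det \varphi|_{\mathcal{H}} = \lambda/2$. By Lemma~\ref{laplacvarphij} the function $c := \trace(\varphi J)$ is constant on $M$. A direct $2\times 2$ computation using the decomposition $\varphi = (S + \tilde S)/2$ on $\mathcal{H}$ gives the purely algebraic identity $|\varphi|^2 - \trace(\varphi^2) = c^2$, so that $|\varphi|^2 = c^2 - \lambda$ is constant, $4b^2 = c^2$, and $\lambda_1^2 = c^2 - 2\lambda$.

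If $\zeta$ is Killing, then $S \equiv 0$ and $\varphi$ is a constant multiple of $J$ on $\mathcal{H}$. When $b = 0$ this forces $\varphi \equiv 0$, i.e., $\zeta$ is parallel; otherwise $\varphi^2 = -b^2\, \Id$ on $\mathcal{H}$, and a conformal rescaling depending on $b$ normalizes this to $\varphi^2 = -\Id + \zeta \otimes \zeta$, after which the Sasakian identity $(\nabla_X\varphi)(Y) = \langle X, Y\rangle \zeta - \langle Y,\zeta\rangle X$ is read off Kostant's formula \eqref{konstant}.

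In the non-Killing case I plan to pick, locally, an orthonormal frame $(\zeta_2, \zeta_3)$ of $\mathcal{H}$ adapted to $\varphi$: eigenvectors of $\varphi$ when $c = 0$ (case (1)), or the $\pi/4$-rotation of the eigenframe of $S$ when $c \ne 0$ (case (2)), chosen so that $\varphi \zeta_2 = \tfrac{\lambda_1 - 2b}{2}\zeta_3$ and $\varphi \zeta_3 = \tfrac{\lambda_1 + 2b}{2}\zeta_2$. Three tools do the work, applied at $(\zeta_1 = \zeta, \zeta_2, \zeta_3)$: the Codazzi identity from \eqref{codazzi}, the Riccati identity from \eqref{codazzi}, and the 3-dimensional curvature formula
\[
R(X,Y)Z = \Ric(Y,Z)X - \Ric(X,Z)Y + \langle Y,Z\rangle \Ric(X) - \langle X,Z\rangle \Ric(Y) - \tfrac{\Scal}{2}\bigl(\langle Y,Z\rangle X - \langle X,Z\rangle Y\bigr).
\]
From $\Ric(\zeta) = \lambda\zeta$ and $\langle \zeta_i,\zeta\rangle = 0$ for $i=2,3$, the last formula yields $R(\zeta_2, \zeta_3)\zeta = 0$, and then Codazzi forces $\langle \nabla_{\zeta_2}\zeta_2, \zeta_3\rangle = \langle \nabla_{\zeta_3}\zeta_3, \zeta_2\rangle = 0$. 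Riccati applied to $\zeta_2$ and $\zeta_3$ produces $\Ric(\zeta_2,\zeta_3) = 0$ and a linear relation between $\Ric(\zeta_2,\zeta_2) - \Ric(\zeta_3,\zeta_3)$ and the horizontal connection term $\alpha := \langle \nabla_{\zeta_1}\zeta_2, \zeta_3\rangle$. Computing $R(\zeta_2, \zeta_3)\zeta_2$ in two ways -- via the 3-dimensional formula and via $\nabla_{\zeta_2}\nabla_{\zeta_3}\zeta_2 - \nabla_{\zeta_3}\nabla_{\zeta_2}\zeta_2 - \nabla_{[\zeta_2,\zeta_3]}\zeta_2$ -- then delivers $\Scal = \lambda$ in case (1) and $\alpha = -(\Scal - \lambda)/(4b)$ in case (2). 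For case (2), the identity $\zeta(\Scal) = -2\langle \Ric, \varphi\rangle$ (obtained by setting $\trace(\varphi) = 0$ in Lemma~\ref{laplacians}(1)) now vanishes because $\Ric$ is diagonal, while $R(\zeta_1, \zeta_2)\zeta_3 = 0$, computed via connections, yields $\zeta_2(\alpha) = 0$, hence $\zeta_2(\Scal) = 0$; symmetrically $\zeta_3(\Scal) = 0$. Thus $\Scal$ is constant, and the stated Lie brackets drop out by direct computation from the now-determined connection coefficients.

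The main obstacle is case (2), where the Codazzi, Riccati, and 3-dimensional curvature identities must be used in concert to pin down simultaneously the connection coefficient $\alpha$, the off-diagonal Ricci entry $\Ric(\zeta_2,\zeta_3)$, and the horizontal derivatives of $\Scal$. The delicate initial step is choosing the adapted frame, a $\pi/4$-rotation of the eigenframe of $S$ which is neither an eigenframe of $\varphi$ nor diagonalizes $\Ric$; only after this rotation (together with fixing the sign of $b$ relative to the orientation on $\mathcal{H}$) do the three identities close up to yield the claimed Lie-algebra structure and Ricci form \eqref{Riccifinal}.
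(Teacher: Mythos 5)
Your argument for the first and most load-bearing claim, the constancy of $|\varphi|$, has a gap: you derive it from the constancy of $c=\trace(\varphi J)$, which you take from Lemma \ref{laplacvarphij}. That lemma only gives $\Delta(\trace(\varphi J))=0$; passing to constancy requires $M$ to be compact, and compactness is not among the hypotheses of the present lemma (it is assumed in Theorem \ref{thmglobal}, but this lemma is stated -- and proved in the paper -- as a local result, as its conclusion about frames ``around each point'' indicates). Your algebraic identity $|\varphi|^2-\trace(\varphi^2)=c^2$ is correct: writing $\tilde{S}=\varphi-\varphi^{T}=\mu J$ on $\mathcal{H}$ gives $c=-\mu$ and $|\varphi|^2-\trace(\varphi^2)=\tfrac12|\tilde{S}|^2=\mu^2$, and combined with $\trace(\varphi^2)=-\lambda$ from \eqref{ode2dimen3} it does reduce everything to the constancy of $c$. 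But on a non-compact manifold a harmonic function need not be constant, so the reduction does not close. The paper's proof is purely local: $\zeta(|\varphi|^2)=0$ follows from \eqref{f1} with $\trace(\varphi)=0$, and the horizontal derivatives $e_1(|S|^2)=e_2(|S|^2)=0$ are extracted from the Codazzi-type identity \eqref{eq:codS} contracted against $\tilde{S}$, using $\delta S=-(|S|^2/2)\zeta$. You would need to import that argument, or else add compactness to your hypotheses -- which suffices for every application in the paper but proves a strictly weaker lemma.

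The rest of your plan is correct and, after the frame rotation, lands on the same identities as the paper by a legitimately different route. You work from the outset in the $\pi/4$-rotation of the eigenframe of $S$, where $\varphi\zeta_2=\tfrac{\lambda_1-2b}{2}\zeta_3$ and $\varphi\zeta_3=\tfrac{\lambda_1+2b}{2}\zeta_2$ with constant coefficients, and obtain $\langle\nabla_{\zeta_2}\zeta_2,\zeta_3\rangle=\langle\nabla_{\zeta_3}\zeta_3,\zeta_2\rangle=0$ from Codazzi together with $R(\zeta_2,\zeta_3)\zeta=0$ (this uses $\lambda_1\neq0$, i.e., the non-Killing hypothesis -- worth saying explicitly), $\Ric(\zeta_2,\zeta_3)=0$ from Riccati, and $2b\,\alpha=-(\Scal-\lambda)/2$ from the direct computation of $R(\zeta_2,\zeta_3,\zeta_3,\zeta_2)$; I checked these and they are consistent with \eqref{christoffel1}--\eqref{christoffel3}. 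The paper reaches the same conclusions in the eigenframe of $S$, getting the connection coefficients from $\delta S$ and the constancy of $\Scal$ by differentiating $\langle\nabla_{e_1}e_1,e_2\rangle=0$ along $\zeta$ and along $e_2$, whereas you use $R(\zeta_1,\zeta_2)\zeta_3=0$ to kill $\zeta_2(\alpha)$; both are fine. One cosmetic slip: from \eqref{f2} the identity is $\zeta(\Scal)=2\trace(\varphi^{T}\Ric)$, not $-2\langle\Ric,\varphi\rangle$; the sign is immaterial since the right-hand side vanishes once $\Ric$ is diagonal in your frame.
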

\begin{proof} 
We first show that $|\varphi|$ is constant in the $\zeta$-direction. Indeed, since $\varphi$ has vanishing trace, we use \eqref{f1} and the third identity of \eqref{ode2dimen3} to obtain
\[
\zeta(|\varphi|^2) = \delta(\Ric(\zeta)) = \lambda \, \delta\zeta = 0.
\]
To show that the second fundamental form $\varphi$ has constant norm in the other directions, we need to perform some computations. For simplicity, we set
\[
(d^\nabla S)(X,Y) \doteq (\nabla_X S)(Y) - (\nabla_Y S)(X), \quad \text{for all } X,Y \in \mathfrak{X}(M).
\]
Let $(e_i)$ be a local orthonormal frame on $M$. First observe that
\begin{equation}\label{eq:normsphi}
|S|^2 = 2|\varphi|^2 + 2\trace(\varphi^2)\quad\text{and}\quad
|\tilde{S}|^2=2|\varphi|^2-2\trace(\varphi^2).
\end{equation}
On the other hand, from \eqref{ode2dimen3}, we have that
\begin{equation}\label{normsphisss}
\trace(\varphi^2)=-\Ric(\zeta,\zeta)=-\lambda={\rm constant}.
\end{equation}
From the equation \eqref{eq:codS}, we deduce that
\begin{eqnarray}\label{eq:dnables}
\sum_i \langle (d^\nabla S)(e_i, \tilde{S}(e_i)), X \rangle
&\!\!\!=\!\!\!& \sum_i \langle (\nabla_X \tilde{S})(e_i), \tilde{S}(e_i) \rangle - 2 \sum_i R(e_i, \tilde{S}(e_i), \zeta, X) \nonumber\\
&\!\!\!=\!\!\!& \frac{1}{2} X(|\tilde{S}|^2) - 2 \sum_i R(e_i, \tilde{S}(e_i), \zeta, X) \nonumber\\
&\!\!\!=\!\!\!& \frac{1}{2} X(|S|^2) - 2 \sum_i R(e_i, \tilde{S}(e_i), \zeta, X).
\end{eqnarray}
Now, in view of \eqref{dz}, Lemma \ref{codvarphitra}(1), \eqref{eq:normsphi} and \eqref{normsphisss} we have that  
\begin{equation} \label{eq:deltaS}
\delta S=\delta(\varphi+\varphi^T)=(-|\varphi|^2+\lambda)\zeta=-(|S|^2/2)\zeta.
\end{equation}
Consider a local orthonormal frame field of the form $(e_1,e_2,\zeta)$ on $M$. Keeping in mind \eqref{eq:deltaS} and the symmetry of $S$, we compute 
\begin{eqnarray*}
 \langle(d^\nabla S)(e_1,e_2),e_1\rangle&\!\!\!=\!\!\!&\langle (\nabla_{e_1}S)(e_2)-(\nabla_{e_2}S)(e_1), e_1\rangle\\
 &\!\!\!=\!\!\!&\langle (\nabla_{e_1}S)(e_1),e_2\rangle-\langle(\nabla_{e_2}S)(e_1), e_1\rangle\\
 &\!\!\!=\!\!\!&\langle-(\nabla_{e_2}S)(e_2)+(|S|^2/2)\zeta,e_2\rangle-\langle(\nabla_{e_2}S)(e_1), e_1\rangle\\
  &\!\!\!=\!\!\!&\langle-(\nabla_{e_2}S)(e_2),e_2\rangle-\langle(\nabla_{e_2}S)(e_1), e_1\rangle\\
&\!\!\!=\!\!\!&-e_2(\trace(S))=-2e_2(\trace(\varphi))\\
 &\!\!\!=\!\!\!&0.
\end{eqnarray*}
In the same way, we prove that
$$\langle(d^\nabla S)(e_1,e_2),e_2\rangle=0.$$
Letting $X=e_1$ in \eqref{eq:dnables}, we get that 
$$
 \frac{1}{2}e_1(|S|^2)=\sum_{i}\langle (d^\nabla S)(e_i,\tilde{S}(e_i)),e_1 \rangle  +2\sum_{i} R(e_i,\tilde{S}(e_i),\zeta,e_1),
$$
from where we deduce that
\begin{eqnarray*} 
  \frac{1}{2}e_1(|S|^2)&=&2\langle\tilde{S}(e_1),e_2\rangle \langle (d^\nabla S)(e_1,e_2),e_1 \rangle+4  \langle\tilde{S}(e_1),e_2\rangle R(e_1,e_2,\zeta,e_1)\\
 &=&4 \langle\tilde{S}(e_1),e_2\rangle \Ric(\zeta,e_2)\\
 &=&0.
\end{eqnarray*}
Following the same steps, we arrive at the conclusion
$e_2(|S|^2)=0.$
From Equation \eqref{eq:normsphi}, it follows that the norm of the second fundamental form $\varphi$ is constant.

Using Lemma \ref{36} and since $\varphi$ is divergence-free we deduce that
\begin{equation}\label{zscal}
\zeta(\scal)=0.
\end{equation}
From \eqref{f2} it follows that
\begin{equation}\label{ricS}
0=\trace(\varphi^T\Ric)=\langle \Ric,\varphi\rangle=\frac{\langle\Ric,S\rangle}{2}.
\end{equation}
Since $S$ is symmetric, trace-free, with constant norm, there exists a local orthonormal frame $(e_1,e_2)\in\mathcal{H}$ such that
$S(e_1)=\lambda_1e_1$ and $S(e_2)=-\lambda_1e_2,$
where $\lambda_1$ is constant. In this case, from the second equation of \eqref{ode2dimen3}, we deduce that
$$\det\varphi=\frac{\lambda}{2}.$$
As a consequence from \eqref{eq:normsphi}, the matrix of $\varphi$ with respect to the frame $(e_1,e_2)$ can be written in the form
\begin{equation}\label{matrixphi}
\varphi=
\begin{pmatrix}
\frac{\lambda_1}{2}&b\\
-b&-\frac{\lambda_1}{2}
\end{pmatrix}
\end{equation}
where
\begin{equation}\label{beq}
b^2=\frac{\lambda}{2}+\frac{\lambda_1^2}{4}={\rm constant}.
\end{equation}
From \eqref{ricS} it follows that
\begin{equation}
0=\lambda_1(\Ric(e_1,e_1)-\Ric(e_2,e_2)).
\end{equation}
Since $\lambda_1$ is constant, only two cases occur:\\\\
{\bf Case 1:} Suppose $\lambda_1=0$. In this case $S=0$ and $\zeta$ is a unit Killing vector field.
Here, two sub-cases arise:

\textbf{Sub-Case 1.} If $\lambda = 0$, the tensor $\varphi$ vanishes identically. 
In this case, $\zeta$ is a parallel vector field.

\textbf{Sub-Case 2.} If $\lambda \neq 0$, then, from \eqref{beq}, it follows that $\lambda$ is positive. 
Therefore, by performing a conformal change of the metric and $\zeta$ of the form 
\[
\tilde{g} \doteq b^{2} g \quad\text{and}\quad \tilde{\zeta} \doteq \frac{1}{b}\,\zeta,
\]
the manifold $(M, \tilde{g}, \tilde{\zeta})$ becomes Sasakian. 
Namely, the tensor $\tilde{\varphi}$ is given by
\[
\tilde{\varphi} = -\widetilde{\nabla}\tilde{\zeta} = -\frac{1}{b}\nabla\zeta 
= 
\begin{pmatrix}
0 & -1 \\[4pt]
1 & 0
\end{pmatrix},
\]
and defines a K\"ahler structure on $\mathcal{H}$.

{\bf Case 2:} Suppose $\lambda_1\neq 0$. Then,
$$
\Ric(e_1,e_1)=\Ric(e_2,e_2)=\frac{\scal-\lambda}{2}.
$$
In this situation, we have
\begin{equation}\label{eq:sectionc}
R(e_1,e_2,e_2,e_1)=\frac{\Ric(e_1,e_1)+\Ric(e_2,e_2)-\Ric(\zeta,\zeta)}{2}=\frac{\scal-2\lambda}{2}.
\end{equation}
From Lemma \ref{codvarphitra}(1) it follows on $\mathcal{H}$ that
\begin{equation}\label{nousias}
\nabla_{\zeta}S=\nabla_{\zeta}\varphi+\nabla_{\zeta}\varphi^T=\varphi^2+(\varphi^T)^2+2L=
-\lambda {\rm Id}|_{\mathcal{H}}+2R(\cdot,\zeta)\zeta.
\end{equation}
Note that
$$
(\nabla_{\zeta}S)(e_1)=\nabla_{\zeta}(Se_1)-S(\nabla_{\zeta}e_1)=\lambda_1\nabla_{\zeta}e_1+
\lambda_1\langle\nabla_{\zeta}e_1,e_2\rangle e_2=2\langle\nabla_{\zeta}e_1,e_2\rangle \lambda_1e_2.
$$
Evaluating \eqref{nousias} at $e_1$, and comparing with  the last equation, we get
\begin{equation}\label{ric12}
\Ric(e_1,e_2)=\lambda_1\langle\nabla_{\zeta}e_1,e_2\rangle.
\end{equation}
Using \eqref{eq:deltaS}, we deduce that
\begin{eqnarray*}
\lambda_1^2\zeta&=&(|S|^2/2)\zeta=(\nabla_{e_1}S)(e_1)+(\nabla_{e_2}S)(e_2)\\
&=& \lambda_1\nabla_{e_1}e_1-S(\nabla_{e_1}e_1)-\lambda_1\nabla_{e_2}e_2-S(\nabla_{e_2}e_2)\\
&=&\lambda_1\nabla_{e_1}e_1+\lambda_1\langle \nabla_{e_1}e_1,e_2\rangle e_2-
\lambda_1\nabla_{e_2}e_2-\lambda_1\langle \nabla_{e_2}e_2,e_1\rangle e_1\\
&=& \lambda_1 \langle\nabla_{e_1}e_1,\zeta\rangle\zeta+2\lambda_1\langle \nabla_{e_1}e_1,e_2\rangle e_2
-\lambda_1\langle \nabla_{e_2}e_2,\zeta\rangle\zeta-2\lambda_1\langle \nabla_{e_2}e_2,e_1\rangle e_1\\
&=&\lambda_1^2\zeta+2\lambda_1\langle \nabla_{e_1}e_1,e_2\rangle e_2-2\lambda_1\langle \nabla_{e_2}e_2,e_1\rangle e_1,
\end{eqnarray*}
from where it follows that
\begin{equation}\label{connforms123}
\langle \nabla_{e_1}e_1,e_2\rangle=0
\quad
\text{and}
\quad
\langle \nabla_{e_2}e_2,e_1\rangle=0.
\end{equation}
Consequently, from \eqref{connforms123} and \eqref{matrixphi}, it follows that
\begin{equation}\label{neweqns}
\nabla_{e_1}e_1=\frac{\lambda_1}{2}\zeta,\,\,\,\nabla_{e_2}e_2=-\frac{\lambda_1}{2}\zeta,\,\,\,
\nabla_{e_1}e_2=-b\zeta,\,\,\,\nabla_{e_2}e_1=b\zeta.
\end{equation}
Differentiating the first equation of \eqref{connforms123} the direction of $\zeta$, and
using \eqref{ric12}, \eqref{neweqns} and \eqref{matrixphi}, we get
\begin{eqnarray*}
0&\!\!\!=\!\!\!&\langle\nabla_{\zeta}\nabla_{e_1}e_1,e_2\rangle+\langle\nabla_{e_1}e_1,\nabla_{\zeta}e_2\rangle\\
&\!\!\!=\!\!\!&\langle R(\zeta,e_1)e_1,e_2\rangle+\langle \nabla_{e_1}\nabla_{_\zeta}e_1,e_2\rangle
+\langle\nabla_{[\zeta,e_1]}e_1,e_2\rangle+\langle \nabla_{e_1}e_1,\zeta\rangle\langle\nabla_{\zeta}e_2,\zeta\rangle\\
&\!\!\!=\!\!\!&\Ric(\zeta,e_2)+\langle \nabla_{e_1}\nabla_{_\zeta}e_1,e_2\rangle+\langle\nabla_{\nabla_{\zeta}e_1}e_1,e_2\rangle-\langle\nabla_{\nabla_{e_1}\zeta}e_1,e_2\rangle\\
&\!\!\!=\!\!\!&\lambda\langle \zeta,e_2\rangle+\langle \nabla_{e_1}\nabla_{_\zeta}e_1,e_2\rangle+\langle\nabla_{\nabla_{\zeta}e_1}e_1,e_2\rangle-\langle\nabla_{\nabla_{e_1}\zeta}e_1,e_2\rangle\\
&\!\!\!=\!\!\!&\langle \nabla_{e_1}\nabla_{_\zeta}e_1,e_2\rangle=e_1\langle\nabla_{\zeta}e_1,e_2\rangle
-\langle\nabla_{\zeta}e_1,\nabla_{\zeta}e_2\rangle\\
&\!\!\!=\!\!\!&(1/\lambda_1)e_1(\Ric(e_1,e_2)).
\end{eqnarray*}
Following the same lines with the second equation of \eqref{connforms123}, we show that the derivative of
$\Ric(e_1,e_2)$ in the $e_2$-direction is also zero. Therefore,
\begin{equation}\label{riiccc}
e_1(\Ric(e_1,e_2))=0=e_2(\Ric(e_1,e_2)).
\end{equation}
Differentiating the first equation of \eqref{connforms123} the $e_2$-direction, using, \eqref{beq}, \eqref{eq:sectionc} and the formulas for the
connection forms, we obtain
\begin{eqnarray*}
0&\!\!\!=\!\!\!&\langle\nabla_{e_2}\nabla_{e_1}e_1,e_2\rangle+\langle\nabla_{e_1}e_1,\nabla_{e_2}e_2\rangle\\
&\!\!\!=\!\!\!&\langle R(e_2,e_1)e_1,e_2\rangle+\langle\nabla_{e_1}\nabla_{e_2}e_1,e_2\rangle
+\langle\nabla_{[e_2,e_1]}e_1,e_2\rangle-\lambda^{2}_1/4\\
&\!\!\!=\!\!\!&(\scal-2\lambda)/2-\langle\nabla_{e_2}e_1,\nabla_{e_1}e_2\rangle+2b\langle\nabla_{\zeta} e_1,e_2\rangle-\lambda^{2}_1/4\\
&\!\!\!=\!\!\!&(\scal-2\lambda)/2+b^2+2b\langle\nabla_{\zeta} e_1,e_2\rangle-\lambda^{2}_1/4\\
&\!\!\!=\!\!\!&(\scal-2\lambda)/2+\lambda/2+2b\langle\nabla_{\zeta}e_1,e_2\rangle.
\end{eqnarray*}
In the above computation, we use that $[e_2,e_1]=2b\zeta$ from \eqref{neweqns}.
Hence,
\begin{equation}\label{scalb}
2b\langle\nabla_{\zeta}e_1,e_2\rangle=-\frac{\scal-\lambda}{2}.
\end{equation}

{\bf Sub-Case 1:} If $b=0$, then we deduce from \eqref{scalb} that $\scal=\lambda={\rm constant}$.

{\bf Sub-Case 2:} If $b\neq 0$, we obtain from \eqref{ric12} that
$$
\Ric(e_1,e_2)=-\lambda_1\frac{\scal-\lambda}{4b}
$$
which, with the help of \eqref{riiccc}, yields
$$
e_1(\scal)=0=e_2(\scal).
$$
Combining with \eqref{zscal}, it follows that $\scal$ is constant. Moreover,
\begin{equation}\label{christoffel1}
\nabla_{e_1}e_1=\frac{\lambda_1}{2}\zeta,\quad \nabla_{e_2}e_2=-\frac{\lambda_1}{2}\zeta,\quad
\nabla_{e_1}e_2=-b\zeta,\quad \nabla_{e_2}e_1=b\zeta,
\end{equation}
and
\begin{equation}\label{christoffel2}
\nabla_{\zeta}e_1=-\left(\frac{\scal-\lambda}{4b}\right)e_2,\quad \nabla_{\zeta}e_2=\left(\frac{\scal-\lambda}{4b}\right)e_1.
\end{equation}
Furthermore,
\begin{equation}\label{christoffel3}
\nabla_{e_1}\zeta=-\frac{\lambda_1}{2}e_1+be_2\quad\text{and}\quad \nabla_{e_2}\zeta=-be_1+\frac{\lambda_1}{2}e_2.
\end{equation}
Consider now the local orthonormal frame
$$\Big(\zeta_1\doteq\zeta,\zeta_2\doteq\frac{e_1-e_2}{\sqrt{2}}, \zeta_3\doteq\frac{e_1+e_2}{\sqrt{2}}\Big).$$
Keeping in mind \eqref{christoffel1}, \eqref{christoffel1} and \eqref{christoffel1}, direct computations yield
$$
[\zeta,\zeta_2]=\left(-\frac{\scal-\lambda}{4b}+\frac{\lambda_1}{2}-b\right)\zeta_3,\,\,\,\,\,\, [\zeta,\zeta_3]=\left(\frac{\scal-\lambda}{4b}+\frac{\lambda_1}{2}+b\right)\zeta_2,\,\,\,\,\,\,
[\zeta_2,\zeta_3]=-2b\zeta.
$$
According to Example \ref{Perrone}, the Ricci tensor with respect to this frame is given by
\begin{equation*}
\Ric=
\begin{pmatrix}
\lambda&0&0\\
0&\frac{\scal-\lambda}{2}\big(1+\frac{\lambda_1}{2b}\big)&0\\
0&0&\frac{\scal-\lambda}{2}\big(1-\frac{\lambda_1}{2b}\big) 
\end{pmatrix}.
\end{equation*}
This completes the proof.
\end{proof} 

\section{Proofs of the main results}
We are now ready to prove the main results of the paper.

{\bf Proof of Theorem \ref{thmglobal}:} At first we show that  $\zeta$ is divergence-free and the norm of $\varphi$ is constant.
Since $M$ is compact, we deduce from Lemma \ref{laplacvarphij} that
$$\trace(\varphi J)=c\in\R.$$
By the first equation in \eqref{ode2dimen3} we deduce that
$$c\trace(\varphi)=0.$$
If $c$ is not zero then we immediately obtain the result. Assume now that $c=0$.
In this case $\varphi=\varphi^T$ and the second equation in \eqref{ode2dimen3} reduces to
$$\zeta(\trace(\varphi))=|\varphi|^2+\lambda.$$
At a minimum point $x_0$  of $\trace(\varphi)$, we have that
$$0\le |\varphi|^2(x_0)=-\lambda\le 0.$$
Consequently $\lambda=0$. Observe now that at the minimum and maximum points $x_0$ and $x_1$ of $\trace(\varphi)$, we have
$$\varphi|_{x_0}=\varphi|_{x_1}=0.$$
Therefore, we deduce that
$$0=\trace(\varphi|_{x_0})\leq \trace(\varphi)\leq \trace(\varphi|_{x_1})=0.$$
Hence $\trace(\varphi)=0$ at each point.  The last statement follows from Lemma \ref{prop:threedimecase}.

\begin{enumerate}[\rm(1)]
\item If $\zeta$ is Killing, then either $\zeta$ is parallel or, up to a conformal change of the metric, 
the manifold $M$ is Sasakian. 
Hence, either the universal cover of $M$ is a direct product $\mathbb{R} \times N$, 
where $N$ is a complete and simply connected surface (so that 
$M$ is diffeomorphic to a quotient $\mathbb{R} \times N / \Gamma$) or, 
by the Uniformization Theorem \ref{thm:sasaki3manifold}, 
the manifold $M$ is diffeomorphic to one of the following:
\[
\mathbb{S}^3 / \Gamma, \qquad 
\widetilde{SL}(2,\mathbb{R}) / \Gamma, \qquad 
{\rm Nil}^3 / \Gamma,
\]
where $\Gamma$ is a discrete subgroup of the connected component of the corresponding isometry group.

\smallskip
\item Assume now that $\zeta$ is not Killing. 
Since $\Ric(\zeta) = \lambda \zeta$ with $\lambda \ge 0$, 
the function $b$ in Lemma~\ref{prop:threedimecase} cannot vanish, 
as this would imply $\lambda_1 = 0$ (contradicting $\lambda \ge 0$). 
The universal covering of $M$ is therefore diffeomorphic to a unimodular Lie group \cite{kobayachi, Milnor,palais}. 
This completes the proof of the theorem.
\end{enumerate}

The converse is also true; namely, a unit Killing vector field 
satisfying the curvature condition \eqref{sasaki},
where $\lambda$ is a non-negative constant, must be a harmonic unit vector field as shown in Lemma \ref{kill}. Also, it is shown in Example  \ref{Perrone} that given a Lie group whose Lie bracket on its Lie algebra satisfies \eqref{lie} gives rise to a unit totally geodesic harmonic vector field.

This completes the proof.
\hfill$\square$

{\bf Proof of Corollary \ref{thmsphere}:} In the case of $\S^3$ the only possibility is when $\lambda_1=0$, since otherwise the Ricci tensor cannot take the form of \eqref{Riccifinal}. Then $\zeta$ should be Killing. On the other hand is it well-known that the Hopf vector field is the only Killing unit vector field.\hfill$\square$

{\bf Proof of Corollary \ref{thmflat}:} In this case $\lambda=0$. Then either $\zeta$ is Killing on $\TT^3$ and
 $\zeta$ is parallel,
or we are in the second case. By using the equations on the Christoffel symbols we find that for
$$b=-\lambda_1/2$$
we have
$$
[\zeta,\zeta_2]=\lambda_1\zeta_3,\quad [\zeta,\zeta_3]=0,\quad [\zeta_2,\zeta_3]=\lambda_1\zeta.
$$
Then we use the computation in Example \ref{Perrone}  to conclude. \hfill$\square$

{\bf Proof of Corollary \ref{thmhyper}:} By compactness, Lemma \ref{36}, and integration it follows that $\trace(\varphi)=0$.
As in the proof of Corollary \ref{thmsphere}, the Ricci tensor in Lemma \ref{prop:threedimecase} cannot take the form of  \eqref{Riccifinal}. Hence, it follows that $\zeta$ should be Killing. But it is known that Killing vector fields cannot occur on compact
manifolds with negative Ricci tensor; see, for example, \cite{petersen}. \hfill$\square$

{\bf Proof of Corollary \ref{thmcontact}:}
First, we show that $\trace(\varphi J)$ is nowhere vanishing. Indeed, assume that  $\trace(\varphi J)$ is zero at some point $x_0$, then from the first equation of \eqref{ode2dimen3}, it vanishes along the integral curve $\gamma$ of $\zeta$ passing through $x_0$. Therefore, along $\gamma$, the tensor $\varphi$ has the form
$$\varphi=\begin{pmatrix}a&b\\b&d\end{pmatrix}.$$
Observe that along $\gamma$ the equation \eqref{ode2dimen3} takes the form
\begin{equation}\label{integral}
\zeta(\trace(\varphi))=\trace(\varphi^2)+\Ric(\zeta,\zeta)=a^2+d^2+2b^2+\Ric(\zeta,\zeta)>\varepsilon.
\end{equation}
Because $M$ is compact, the integral curves of $\zeta$ are complete, i.e., they are defined for all values in $\R$.
On the other hand, the function $\trace(\varphi)$
is bounded from above. Integrating \eqref{integral} we deduce that
$$
\trace(\varphi)(\gamma(s))\ge\varepsilon s+\trace(\varphi)(\gamma(0)),
$$
which leads to a contradiction.

To show that the dual form
of the vector field $\zeta$ defines a contact structure, we take $\{\zeta,e_1,e_2\}$ a local frame and compute 
\begin{eqnarray*}
  (\zeta\wedge d\zeta) (\zeta,e_1,e_2)&\!\!\!=\!\!\!&d\zeta(e_1,e_2)\\
  &\!\!\!=\!\!\!&-(e_1\wedge\varphi(e_1)+e_2\wedge\varphi(e_2))(e_1,e_2)\\
  &\!\!\!=\!\!\!&\langle\varphi(e_2),e_1\rangle-\langle\varphi(e_1),e_2\rangle\\
  &\!\!\!=\!\!\!&\trace(\varphi J), 
\end{eqnarray*}
which is nowhere zero as shown before. This finishes the proof. \hfill$\square$

\begin{bibdiv}
\begin{biblist}

\bib{baird1}{book}{
   author={Baird, P.},
   author={Wood, J.},
   title={Harmonic morphisms between Riemannian manifolds},
   series={London Mathematical Society Monographs. New Series},
   volume={29},
   publisher={The Clarendon Press, Oxford University Press, Oxford},
   date={2003},
}

\bib{Belgun}{article}{
   author={Belgun, F.},
   title={On the metric structure of non-K\"ahler complex surfaces},
   journal={Math. Ann},
   volume={317},
   date={2000},
   pages={1-40},
}

\bib{Ben1}{article}{
 author={Benyounes, M.},
 author={Loubeau, E.},
 author={Wood, C.M.},
 title={Harmonic vector fields on space forms},
 journal={Geom. Dedicata},
 volume={177},
 pages={323-352},
 date={2015},
}

\bib{blair}{book}{
   author={Blair, D.E.},
   title={Riemannian geometry of contact and symplectic manifolds},
   series={Progress in Mathematics},
   volume={203},
   publisher={Birkh\"{a}user Boston, Ltd., Boston, MA},
   date={2010},
}

\bib{Borrelli}{article}{
 author={Borrelli, V.},
 author={Brito, F.},
 author={Gil-Medrano, O.},
 title={The infimum of the energy of unit vector fields on odd-dimensional spheres},
 journal={Ann. Global Anal. Geom.},
 volume={23},
 number={2},
 pages={129-140},
 date={2003},
}

\bib{brito2}{article}{
 author={Brito, F.},
 author={Gomes, A.},
 author={Nunes, G.},
 title={Energy and volume of vector fields on spherical domains},
 journal={Pac. J. Math.},
 volume={257},
 pages={1-7},
 date={2012},
}

\bib{brito}{article}{
   author={Brito, F.},
   title={Total bending of flows with mean curvature correction},
   journal={Diff. Geom. and its Appl.},
   volume={334},
   date={2000},
   pages={157-163},
}

\bib{carriere}{article}{
   author={Carri{\`e}re, Y.},
   title={Flots riemanniens},
   journal={Structure transverse des feuilletages, Toulouse 1982, Ast{\'e}risque},
   volume={116},
   date={1984},
   pages={31-52},
}

\bib{Deturck}{article}{
 author={DeTurck, D.},
 author={Gluck, H.},
 author={Storm, P},
 title={Lipschitz minimality of Hopf fibrations and Hopf vector fields},
 journal={Algebr. Geom. Topol.},
 volume={13},
 pages={1369-1412},
 date={2013},
}

\bib{fourtzis}{article}{
   author={Fourtzis, I.},
   author={Markellos, M.},
   author={Savas-Halilaj, A.},
   title={Gauss maps of harmonic and minimal great circle fibrations},
   journal={Ann. Global Anal. Geom.},
   volume={63},
   date={2023},
   pages={Article No 12},
}

\bib{Geiges}{article}{
   author={Geiges, H.},
   title={Normal contact structures on $3$-manifolds},
   journal={Tohoku Math. J.},
   volume={49},
   date={1997},
   pages={415-422},
}

\bib{medrano-b}{book}{
 author={Gil-Medrano, O.},
 book={
 title={The volume of vector fields on Riemannian manifolds. Main results and open problems},
 publisher={Cham: Springer},
 },
 title={The volume of vector fields on Riemannian manifolds. Main results and open problems},
 series={Lecture Notes in Mathematics},
 volume={2336},
 date={2023},
 publisher={Springer, Cham},
}

\bib{medranoh}{article}{
   author={Gil-Medrano, O.},
   author={Hurtado, A.},
   title={Volume, energy and generalized energy of unit vector fields on Berger spheres: stability of Hopf vector fields},
   journal={Proc. R. Soc. Edinb., Sect. A, Math.},
   volume={135},
   date={2005},
   pages={789-813},
}

\bib{medrano3}{article}{
   author={Gil-Medrano, O.},
   author={Llinares-Fuster, E.},
   title={Second variation of volume and energy of vector fields. Stability of Hopf vector fields},
   journal={Math. Ann.},
   volume={320},
   date={2001},
   pages={531-545},
}

\bib{gluck-gu}{article}{
   author={Gluck, H.},
   author={Gu, W.},
   title={Volume-preserving great circle flows on the $3$-sphere},
   journal={Geom. Dedicata},
   volume={88},
   date={2001},
   pages={259-282},
}

\bib{gluck3}{article}{
   author={Gluck, H.},
   author={Warner, F.},
   title={Great circle fibrations of the three-sphere},
   journal={Duke Math. J.},
   volume={50},
   date={1983},
   pages={107-132},
}

\bib{han}{article}{
   author={Han, D.-S.},
   author={Yim, J-W.},
   title={Unit vector fields on spheres, which are harmonic maps}
   journal={Math. Z.},
   volume={227},
   date={1998}
   pages={83-92},
}

\bib{kobayachi}{book}{
   author={Kobayashi, S.},
   title={Transformation groups in differential geometry},
   series={Ergebnisse der Mathematik und ihrer Grenzgebiete},
   volume={70},
   publisher={Springer-Verlag},
   date={1972},
}

\bib{Milnor}{article}{
   author={Milnor, J.},
  title={Curvatures of left invariant metrics on Lie groups}
   journal={Adv. Math.},
   volume={21},
   date={1976}
   pages={293-329},
}

\bib{palais}{book}{
 author={Palais, R.},
 title={A global formulation of the Lie theory of transformation groups},
 series={Mem. Am. Math. Soc.},
 volume={22},
 pages={123},
 date={1957},
}

\bib{perrone2}{article}{
 author={Perrone, D.},
 title={On the energy of a unit vector field},
 journal={Ann. Global Anal. Geom.},
 volume={28},
 pages={91-106},
 date={2005},
}

\bib{perrone}{article}{
   author={Perrone, D.},
   title={Unit vector fields on real space forms which are harmonic maps},
   journal={Pac. J. Math.},
   volume={239},
   date={2009},
   pages={89-104},
  }

\bib{petersen}{book}{
 author={Petersen, P.},
 book={
 title={Riemannian geometry},
 publisher={Cham: Springer},
 },
 title={Riemannian geometry},
 edition={3rd edition},
 series={Graduate Texts in Mathematics},
 volume={171},
 date={2016},
}

\bib{raymond}{article}{
   author={Raymond, F.}
   author= {Vasquez, A.},
   title={Closed $3$-manifolds whose universal covering is a Lie
group},
   journal={Topology and its Appl.},
   volume={12},
   date={1981},
   pages={161-179},
}

 \bib{wiegmink}{article}{
   author={Wiegmink, G.},
   title={Total bending of vector fields on Riemannian manifolds},
   journal={Math. Ann.},
   volume={303},
   date={1995},
   pages={325-344},
  }

\bib{wood}{article}{
   author={Wood, C.M.},
   title={On the energy of a unit vector field},
   journal={Geom. Dedicata},
   volume={64},
   date={1997},
   pages={319-330},
}

\bib{wood1}{article}{
 author={Wood, C.M.},
 title={The energy of Hopf vector fields},
 journal={Manuscripta Math.},
 volume={101},
 pages={71-88},
 date={2000},
}
		
\end{biblist}
\end{bibdiv}

\end{document}